\documentclass[preprint,12pt]{elsarticle}

\usepackage{amssymb}
\usepackage{amsmath}

\usepackage[top=1.1in, bottom=1.2in, left=0.8in, right=0.8in]{geometry}
\usepackage{amsmath,amsfonts,amssymb,amsthm,bm}
\usepackage{mathrsfs,dsfont}
\usepackage{mathtools}
\usepackage{graphicx}
\usepackage{colortbl,dcolumn}
\usepackage{psfrag}
\usepackage{booktabs}
\usepackage{subfigure}
\usepackage{hyperref}
\usepackage{url}
\usepackage{comment}
\allowdisplaybreaks[4]
\numberwithin{equation}{section}

\newcommand{\R}{\mathbb{R}}
\newcommand{\N}{\mathbb{N}}
\newcommand{\E}{\mathbb{E}}

\renewcommand{\Re}{{\rm{Re}}}
\renewcommand{\Im}{{\rm{Im}}}
\newcommand{\diff}{{\,\rm{d}}}
\newcommand{\dif}{{\rm{d}}}

\renewcommand{\i}{\mathbf{i}}

\newcommand{\Ph}{\Phi}
\newcommand{\scrJ}{\mathcal{J}}
\newcommand{\scrK}{\mathcal{K}}
\newcommand{\lam}{\lambda}
\newcommand{\lamI}{\lam_{1}}
\newcommand{\lamII}{\lam_{2}}
\newcommand{\norm}[1]{\|#1\|}
\newcommand{\abs}[1]{|#1|}
\newtheorem{theorem}{Theorem}[section]

\newtheorem{lemma}[theorem]{Lemma}
\newtheorem{proposition}[theorem]{Proposition}

\newtheorem{remark}[theorem]{Remark}

\journal{Nuclear Physics B}

\begin{document}

\begin{frontmatter}



\title{Novel physical property preserved methods for stochastic Schr\"{o}dinger--KdV equation}


\author[labelCZH]{Ziheng Chen} 
\affiliation[labelCZH]{
            organization={School of Mathematics and Statistics},
            addressline={Yunnan University}, 
            city={Kunming},
            postcode={650500}, 
            state={Yunnan},
            country={China}}

\author[labelHJL1,labelHJL2]{Jialin Hong} 
\affiliation[labelHJL1]{
            organization={LSEC, ICMSEC, Academy of Mathematics and Systems Science},
            addressline={Chinese Academy of Sciences}, 
            city={Beijing}, 
            postcode={100190}, 
            state={Beijing},
            country={China}}
\affiliation[labelHJL2]{
            organization={School of Mathematical Sciences},
            addressline={University of Chinese Academy of Sciences}, 
            city={Beijing}, 
            postcode={100049}, 
            state={Beijing},
            country={China}}

\author[labelSLY]{Liying Sun} 
\affiliation[labelSLY]{
            organization={Academy for Multidisciplinary Studies},
            addressline={Capital Normal University}, 
            city={Beijing},
            postcode={100048}, 
            state={Beijing},
            country={China}}

\begin{abstract}

In this work, we study the stochastic Schr\"odinger--KdV equation driven by additive noise from both analytical and numerical viewpoints. We first establish the evolution laws for the averaged plasmon number, momentum, and energy, together with the conservation of the averaged particle number. Motivated by these intrinsic structures, we develop two temporal discretizations. One is constructed based on the splitting strategy and Crank--Nicolson scheme, and is shown to preserve the discrete evolution laws of the averaged plasmon number and momentum, as well as the discrete conservation law of the averaged particle number. 
The other is proposed within the constant scalar auxiliary variable framework, in which the nonlinear energy functional is reformulated so that a modified averaged energy law can be preserved at the discrete level.
Combining these temporal discretizations with a local discontinuous Galerkin approximation in space yields structure-preserving full discretizations inheriting the corresponding discrete physical laws. Numerical experiments are presented to validate the theoretical results and to demonstrate the accuracy, robustness, and effectiveness of the proposed methods.

\end{abstract}

%

\begin{keyword}
      Stochastic Schr\"{o}dinger--KdV equation
      \sep
      Averaged momentum evolution law 
      \sep
      Averaged energy evolution law 
      \sep
      Local discontinuous Galerkin method
      \sep
      Constant scalar auxiliary variable approach



\end{keyword}

\end{frontmatter}


\section{Introduction}\label{sec:introduction}

The coupled Schr\"{o}dinger--Korteweg--de Vries (KdV) system is a prototypical model in mathematical physics for describing the resonant interaction between short and long waves, where the short-wave component is governed by a nonlinear Schr\"{o}dinger equation and the long-wave component by a KdV equation. Such a model arises in a variety of physical settings, including fluid dynamics \cite{Benney1977}, nonlinear optics \cite{kivshar2003optical}, and plasma physics \cite{nishikawa1974coupled}. Consider the coupled Schr\"{o}dinger--KdV system
\begin{equation}
\label{eq:detsubsystem}
\left\{\begin{aligned}
      &\i \partial_t U + \partial_x^2 U 
      = \gamma_1 U V + \beta |U|^2 U, 
      \\
      &\partial_t V + \partial_x^3 V 
      = \gamma_2 \partial_x |U|^2 - V \partial_x V
\end{aligned}\right.    
\end{equation}
supplemented with the initial condition $(U(0),V(0)) = (U^{0},V^{0})$. Here, $U = U(t,x)$ denotes the complex-valued Schr\"{o}dinger component, while 
$V = V(t,x)$  stands for the real-valued KdV component.
A notable feature of system \eqref{eq:detsubsystem} is its rich Hamiltonian structure, which gives rise to several conserved quantities; see, e.g., \cite{he2024physical, wang2018high}. In particular, the following invariants 
\begin{itemize}
      \item the number of plasmon: 
      $\mathcal{D}(t) := \int_{\mathcal O} |U|^{2} \diff{x}
      = \mathcal{D}(0), \quad t \geq 0;
      $ 
      \item the number of particle: 
      $\mathcal{P}(t) := \int_{\mathcal O} V \diff{x} 
      = \mathcal{P}(0), \quad t \geq 0;$
      \item the momentum
      $\mathcal{M}(t) :=\int_{\mathcal O} \Big({\rm{Im}}\big(U
      \overline{\partial_{x}U}\big)
      + \frac{\gamma_{1}}{2\gamma_{2}}
      V^{2} \Big) \diff{x} = \mathcal{M}(0),
      \quad t \geq 0;$  
      \item and the energy: 
      $\mathcal{E}(t) :=
      \int_{\mathcal O}\Big(|\partial_{x}U|^{2}
      + \frac{\beta}{2}|U|^{4} + \gamma_{1}|U|^{2}V
      - \frac{\gamma_{1}}{6\gamma_{2}}V^{3} 
      + \frac{\gamma_{1}}{2\gamma_{2}}|
      \partial_{x}V|^{2}\Big)\diff{x}
      = \mathcal{E}(t), \quad t \geq 0,$  
\end{itemize}
are preserved. In practical applications, however, such deterministic dynamics are often influenced by random perturbations originating from thermal fluctuations, external forcing, or unresolved microscopic effects. To account for these uncertainties, one is naturally led to the stochastic Schr\"{o}dinger--KdV (SSKdV) equation \eqref{eq:SKeq}, which takes the form of a coupled system of stochastic partial differential equations driven by additive noise \cite{chen2025global, guo2008attractor}.

Although substantial progress has been made in the numerical treatment of the stochastic Schr\"{o}dinger equation and the stochastic KdV equation, especially in the development of structure-preserving discretizations (see, e.g., \cite{chen2016symplectic, chen2017meansquare, cui2018analysis, cui2017strong, cui2017stochastic,  cui2026effects, dambrosion2026stochastic, hong2024novel, kong2026structure, li2020ultraweak, liu2013mass, liu2024structure}), the corresponding theory and methodology for coupled stochastic dispersive systems are still relatively underdeveloped. In particular, for the stochastic Schr\"{o}dinger--KdV equation, the  evolution laws of averaged key physical quantities and its underlying geometric structures remain largely unexplored. Meanwhile, numerical techniques designed for the single stochastic Schr\"{o}dinger or KdV equation are not directly transferable to the coupled setting, since the interaction terms substantially alter the dynamics of the two components and may break the structural properties preserved by each subsystem in isolation. Furthermore, deterministic structure-preserving numerical discretizations cannot, in general, be extended to the stochastic case in a straightforward manner, because the interplay between stochastic forcing and nonlinear coupling gives rise to new difficulties in preserving the relevant physical laws. As a result, the design of temporal and full discretizations that faithfully inherit the intrinsic structures of the continuous stochastic problem, including the  evolution laws of averaged mass, momentum, and  energy, remains a challenging and largely open problem. This situation calls for new numerical frameworks capable of simultaneously treating nonlinear coupling and stochastic perturbations while retaining the essential physical structures of the original system.

To bridge these gaps, we begin by deriving several fundamental evolution laws for averaged physical quantities of the continuous stochastic system, including the plasmon number, particle number, momentum, and energy. Building on these analytical results, we develop and analyze a new class of structure-preserving numerical methods for the SSKdV equation. Our approach is based on a splitting strategy that decouples the deterministic Hamiltonian part from the stochastic forcing. Within this setting, two Crank--Nicolson-type temporal discretizations are constructed. The first is tailored to preserve, at the discrete level, the evolution laws of the averaged plasmon number and momentum, together with the conservation law of the averaged particle number. To address the more intricate nonlinear energy structure, we further incorporate the constant scalar auxiliary variable (CSAV) technique \cite{Zhang2024CSAV}, which reformulates the energy contribution through an augmented system and thereby enables the construction of a scheme preserving the discrete averaged modified energy evolution law. Notably, the resulting CSAV-based temporal discretization appears to be new even for the deterministic Schr\"{o}dinger--KdV equation. In addition, it avoids the extra assumption, often imposed in standard SAV-type approaches, that the nonlinear functional be bounded from below. Coupling these temporal discretizations with the local discontinuous Galerkin (LDG) method \cite{Cockburn1998, Yan2015, xia2014conservative}, we obtain fully discretizations that inherit the corresponding physical evolution laws. Finally, numerical experiments are carried out to verify the theoretical analysis and to illustrate the accuracy and structure-preserving performance of the proposed methods for both the stochastic model and its deterministic counterpart.

The rest of the paper is organized as follows. In Section~\ref{sec:exact}, we introduce the SSKdV equation and discuss its intrinsic structural properties. Section~\ref{sec:semidiscrete} is devoted to the construction of temporal discretizations and the derivation of the corresponding discrete averaged evolution laws for the physical quantities. In Section~\ref{sec:fulldiscrete}, we further incorporate the LDG spatial discretization and show that the resulting fully discrete schemes inherit the associated discrete physical properties. Numerical experiments are reported in Section~\ref{sec:nmuexperiments} to verify the theoretical results and to illustrate the convergence behavior of the proposed methods. Finally, conclusions and possible directions for future research are presented in Section~\ref{sec:conclusion}. Before proceeding, we introduce some notation that will be used throughout the paper:
\begin{itemize}
\vspace{-0.7em}
\item $\partial_{x}^{k}$ denotes the partial derivative of order $k$ with respect to the spatial variable $x$, where $k =0,1,2,3,4$. We use the convention $\partial_x^0 = I$ and, for simplicity, write $\partial_x := \partial_x^1$;
\vspace{-0.7em}
\item $|\varphi|$ , $\overline{\varphi}$, $\Re(\varphi)$, and $\Im(\varphi)$ stand for the module, the complex conjugate, the real part, and the imaginary part of a complex-valued function $\varphi$, respectively;
\vspace{-0.7em}
\item $L^p(\mathcal O)$, $1\le p<\infty$, denotes the Banach space of complex-valued measurable functions defined on $\mathcal O$ whose $p$-th powers are integrable, equipped with the norm $\|\cdot\|_{L^p}$;
\vspace{-0.7em}
\item For $m \geq 0$, set $H^m := H^m(\mathcal O;\mathbb C),$ and denote by $H_{\mathbb R}^m:=H^m(\mathcal O;\mathbb R)
$ the corresponding real-valued Sobolev space. For simplicity, we use the same notation $\|\cdot\|_{H^m}$ for the norms on $H^m$, $H_{\mathbb R}^m$, and related product spaces whenever no confusion arises. 
Moreover, we write $H:=L^2(\mathcal O;\mathbb C)$ and $H_\mathbb R:=L^2(\mathcal O,\mathbb R)$. The space $H$ is equipped with the inner product $\langle \phi,\psi\rangle=\int_{\mathcal O}\phi(x)\overline{\psi(x)}\,\diff{x}$ for $\phi,\psi\in H$, and the norm $\|\cdot\|=\|\cdot\|_{L^2}$;

\vspace{-0.7em}
\item $\mathcal L_2(U_1,U_2)$ denotes the Hilbert space of Hilbert--Schmidt operators from $U_{1}$ to $U_{2}$.
\end{itemize}

      
          
          
          
          

\section{Physical properties of the SSKdV equation}\label{sec:exact}

In this section, we study the evolution law of physical quantities for the SSKdV equation with periodic boundary conditions
\begin{equation}\label{eq:SKeq}
\left\{\begin{aligned}
      &\diff{u}
      =
      \big(\i\partial_{x}^{2}u 
      - \i(\gamma_{1}v + \beta|u|^{2})u\big) \diff{t}
      + \Phi_{1}\diff{W_{1}(t)},
      \\
      &\diff{v} 
      = 
      \big(-\partial_{x}^{3}v
      + \gamma_{2}\partial_{x}|u|^{2} 
      - v\partial_{x}v\big)\diff{t} 
      + \Phi_{2}\diff{W_{2}(t)},
      \\
      &(u(0),v(0)) = (u_{0},v_{0}),\\
      &u(0, t)=u(L, t), \quad  u_x(0, t)=u_x(L, t), \\
      &v(0, t)=v(L, t),  \quad v_x(0, t)=v_x(L, t),\quad v_{xx}(0, t)=v_{xx}(L, t),
\end{aligned}\right.
\end{equation}
where $\mathcal{O} = (0,L)$ is the spatial domain with periodic boundary conditions, $\i = \sqrt{-1}$, and $\gamma_{1},\gamma_{2},\beta\in\mathbb{R}$. We assume that $\Phi_1\in \mathcal L_2(H_{\mathbb R};H^1)$, $\Phi_2\in \mathcal L_2(H_{\mathbb R};H_{\mathbb R}^1)$, and that $W_1$ and $W_2$ are two independent cylindrical Wiener processes on $H_{\mathbb R}$. More precisely, for $k=1,2$, 
$W_k(t)=\sum_{i=1}^{\infty}\beta_k^{(i)}(t)e_k^{(i)},$ 
where $\{\beta_k^{(i)}\}_{i\in\mathbb{N}}$ are independent real-valued standard Brownian motions, and $\{e_k^{(i)}\}_{i\in\mathbb{N}}$ are orthonormal bases of $H_{\mathbb R}$.
Assume that $u_{0} \in H^{1}$, $v_{0} \in H_{\R}^{1}$, $\gamma_{1}\gamma_{2} > 0$. Then system \eqref{eq:SKeq} admits a unique strong solution $(u,v)$ in $L^{2}\big(\Omega;C([0,T];H^{1}\times H^{1})\big)$ for $T>0$. This global well-posedness result follows from a priori estimates for $(u,v)$, and the details are analogous to those in \cite[Theorem 2.1]{chen2025global}.
It is straightforward to verify that the averaged particle number is conserved for the stochastic system \eqref{eq:SKeq}, namely,
\begin{align*}
\E\bigg[\int_{\mathcal{O}} v(t,x)\,\diff{x}\bigg] = \E\bigg[\int_{\mathcal{O}} v_{0}(x)\,\diff{x}\bigg], \quad t\in[0,T].
\end{align*}
In contrast, the plasmon number, which is conserved in the deterministic setting, satisfies an explicit evolution law in expectation. Applying the It\^o formula yields
\begin{align}\label{lem:continuousplasmon}
\E\bigg[\int_{\mathcal{O}}|u(t,x)|^{2}\diff{x}\bigg]
= \E\bigg[\int_{\mathcal{O}}|u_{0}(x)|^{2}\diff{x}\bigg]
+ \|\Phi_{1}\|_{\mathcal L_2^{0}}^{2}t, \quad t\in[0,T],
\end{align}
where $\|\Phi_{k}\|_{\mathcal L_2^{s}}^{2} := \sum_{i \in \mathbb{N}} \|\Phi_{k} e_{k}^{(i)}\|_{H^s}^2$ for $k = 1,2$ and $s \geq 0$.

Having established the expectation properties of the plasmon number and the particle number, we next consider the momentum functional for the stochastic system \eqref{eq:SKeq}. In contrast to the particle number, the momentum is generally not conserved in expectation, but satisfies an explicit linear evolution law.
\begin{lemma}
\label{lm;exact1}
      Assume that $u_{0} \in H^{1}$, $v_{0} \in H_{\R}^{1}$, $\gamma_{1}\gamma_{2} > 0$, $\Phi_{1} \in \mathcal{L}_{2}(H_{\R};H^{1})$, and $\Phi_{2} \in \mathcal{L}_{2}(H_{\R};H_{\R}^{1})$. Then for any $t \in [0,T]$, 
      \begin{align*}
            &~\E\bigg[\int_{\mathcal{O}} {\rm{Im}}\big(u(t,x)
            \overline{\partial_{x}u(t,x)}\big)
            + 
            \frac{\gamma_{1}}{2\gamma_{2}}
            \big(v(t,x)\big)^{2} \diff{x}\bigg] 
            \\=&~ \notag
            \E\bigg[\int_{\mathcal{O}}{\rm{Im}}\big(u_{0}
            \overline{\partial_{x}u_{0}}\big) 
            + 
            \frac{\gamma_{1}}{2\gamma_{2}} 
            \big(v_{0}\big)^{2} \diff{x}\bigg] 
            + 
            \Big({\rm{Im}}\big(
            {\rm{Tr}}(\Phi_{1}^{*}\partial_{x}\Phi_{1})\big)
            +
            \frac{\gamma_{1}}{2\gamma_{2}}
            \|\Phi_{2}\|_{\mathcal L_2^{0}}^{2}\Big)t,
      \end{align*}
 where $\partial_{x}\Phi_{1} := \partial_{x} \circ \Phi_{1}$. 
\end{lemma}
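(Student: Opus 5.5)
We apply the Itô formula to the functionals $\mathcal{M}_1(u) := \int_{\mathcal O}\Im(u\overline{\partial_x u})\diff{x}$ and $\mathcal{M}_2(v) := \frac{\gamma_1}{2\gamma_2}\int_{\mathcal O} v^2\diff{x}$, add the two results, and take expectations so that the stochastic integrals vanish. The identity then reduces to two facts: the deterministic drift contributions cancel exactly, as in the deterministic conservation of $\mathcal{M}$, and the Itô correction terms produce the constant rate stated in Lemma~\ref{lm;exact1}.

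\textbf{Derivatives.} Viewing $H$ as a real Hilbert space, $\mathcal{M}_1$ is a continuous real quadratic form on $H^1$. Integrating by parts with periodic boundary conditions gives $\int\Im(h\overline{\partial_x u}) = -\int \Im(\partial_x h\,\overline{u}) = \int\Im(u\overline{\partial_x h})$. Hence
\[
D\mathcal{M}_1(u)h = 2\int_{\mathcal O}\Im\big(h\overline{\partial_x u}\big)\diff{x},
\qquad
D^2\mathcal{M}_1(u)(h,h)=2\int_{\mathcal O}\Im\big(h\overline{\partial_x h}\big)\diff{x}.
\]
For the second functional, $D\mathcal{M}_2(v)k=\frac{\gamma_1}{\gamma_2}\int vk$ and $D^2\mathcal{M}_2(v)(k,k)=\frac{\gamma_1}{\gamma_2}\int k^2$.

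\textbf{Itô corrections.} Summing over the basis gives
\[
\tfrac12\sum_i D^2\mathcal{M}_1(\Phi_1e_1^{(i)},\Phi_1e_1^{(i)}) = \sum_i \Im\langle \Phi_1 e_1^{(i)},\partial_x\Phi_1e_1^{(i)}\rangle,
\]
which we identify with $\Im\,\mathrm{Tr}(\Phi_1^*\partial_x\Phi_1)$ up to the sign convention fixed by the inner product. Similarly, $\tfrac12\sum_i D^2\mathcal{M}_2(\Phi_2 e_2^{(i)},\Phi_2 e_2^{(i)}) = \frac{\gamma_1}{2\gamma_2}\|\Phi_2\|^2_{\mathcal L_2^0}$. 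These are finite because $\Phi_1\in\mathcal L_2(H_\R;H^1)$ and $\Phi_2 \in \mathcal L_2(H_\R;H^1_\R)$.

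\textbf{Drift cancellation.}
\begin{itemize}
\item Insert $h=\i\partial_x^2u$: we get $2\int\Im(\i\partial_x^2 u\,\overline{\partial_x u}) = \int \partial_x|\partial_x u|^2=0$.
\item Insert $h=-\i\beta|u|^2u$: we get $-2\beta\int\Re(|u|^2u\overline{\partial_x u}) = -\frac{\beta}{2}\int\partial_x|u|^4=0$.
\item Insert $h=-\i\gamma_1 vu$: we get $-2\gamma_1\int v\Re(u\overline{\partial_xu}) = -\gamma_1\int v\,\partial_x|u|^2$.
\item On the KdV side, $\int v\partial_x^3v=0$ and $\int v^2\partial_xv=0$ by periodicity. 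The term $\frac{\gamma_1}{\gamma_2}\int v\,\gamma_2\partial_x|u|^2 = \gamma_1\int v\,\partial_x|u|^2$ exactly cancels the coupling term above.
\end{itemize}
Hence
\[
\mathcal{M}(t)=\mathcal{M}(0)+\Big(\Im\,\mathrm{Tr}(\Phi_1^*\partial_x\Phi_1)+\tfrac{\gamma_1}{2\gamma_2}\|\Phi_2\|_{\mathcal L_2^0}^2\Big)t+\text{martingale}.
\]

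\textbf{Main obstacle.} The computation above is formal, since $u$ is only $H^1$-valued and the drift terms involve $\partial_x^2u$ and $\partial_x^3v$. We would apply the Itô formula to Galerkin or spectral truncations, or to a mollified problem via Yosida approximations of $\partial_x^2$ and $\partial_x^3$, where every step is rigorous. The drift cancellations persist there up to commutator terms that vanish in the limit. We would then pass to the limit using the a priori bounds in $L^2(\Omega;C([0,T];H^1\times H^1))$ from the well-posedness result.

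Finally, we must check that the stochastic integrals $\int_0^t D\mathcal{M}_k\,\Phi_k\diff W_k$ are true martingales with zero mean. Their integrands are bounded by $\|u\|_{H^1}\|\Phi_1\|_{\mathcal L_2^0}$ and $\|v\|_{H^1}\|\Phi_2\|_{\mathcal L_2^0}$, which are square integrable by the same moment bound. Taking expectations then yields the statement.
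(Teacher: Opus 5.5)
Your proposal is correct and follows essentially the same route as the paper: It\^o's formula applied separately to $\int_{\mathcal O}\Im(u\overline{\partial_x u})\diff x$ and to $\|v\|^2$, integration by parts to remove the dispersive and cubic drift terms, exact cancellation of the coupling term $\gamma_1\int_{\mathcal O}v\,\partial_x|u|^2\diff x$ through the weight $\frac{\gamma_1}{2\gamma_2}$, and expectations to remove the stochastic integrals; your Galerkin and martingale remarks add rigor that the paper's formal computation leaves out. Your hedge on the trace term is warranted, because what you and the paper both derive is $\sum_i\Im\langle\Phi_1e_1^{(i)},\partial_x\Phi_1e_1^{(i)}\rangle$, and under the trace convention the paper itself uses in Section~\ref{sec:nmuexperiments}, namely $\mathrm{Tr}(\Phi_1^*\partial_x\Phi_1)=\sum_i\langle\partial_x\Phi_1e_1^{(i)},\Phi_1e_1^{(i)}\rangle$, this quantity equals $-\Im\,\mathrm{Tr}(\Phi_1^*\partial_x\Phi_1)=\Im\,\mathrm{Tr}\big((\partial_x\Phi_1)^*\Phi_1\big)$, so the stated rate is right only when read in that form (the paper's experiments do not reveal this, since the symmetric noise makes the term vanish).
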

\begin{proof}
      Applying the It\^{o} formula leads to      
      \begin{align*}
           &~\diff \int_{\mathcal{O}} \big(u(t,x)
            \overline{\partial_{x}u(t,x)}\big)\diff{x}\\
            =&~
\left(\int_{\mathcal{O}}\left(\left(\i \partial_x^2 u\right) \overline{\partial_x u}-\left(\i \gamma_1 u v\right) \overline{\partial_x u}-\left(\i \beta|u|^2 u\right) \overline{\partial_x u}+u \overline{\i \partial_x^3 u}-u \overline{\i \gamma_1 \partial_x(u v)}-u \overline{\i\beta \partial_x\left(|u|^2 u\right)}\right) \diff{x}\right) \diff{t} 
\\ &~+\left(\int_{\mathcal{O}}\left(\Phi_1 \diff{W_1(t)} \overline{\partial_x u}+u \overline{\left(\partial_x \Phi_1\right) 
\diff{W_1(t)}}\right) \diff{x}\right)  
       +   \sum_{i=1}^\infty\int_{\mathcal{O}}
       \overline{\partial_x\Phi_1e_{1}^{(i)}}\Phi_1e_{1}^{(i)} \diff x\diff t .
      \end{align*}    
      By the boundary conditions and the integration by parts formula, one can verify that
      \begin{align*}
    &        \i\int_{\mathcal{O}} \partial_{x}^{2}u
            \overline{\partial_{x}u} 
            +
            \overline{\partial_{x}^{2}u}
            \partial_{x}u\diff{x}
            =
            \i\int_{\mathcal{O}} \partial_{x}
            |\partial_{x}u|^{2} \diff{x} 
            =
            0,\\
 &           \i\beta\int_{\mathcal{O}} |u|^{2}
            \big(u\overline{\partial_{x}u}
            +
            \overline{u} \partial_{x}u\big)\diff{x}  
            =
            \i\beta\int_{\mathcal{O}} |u|^{2}
            \partial_{x}|u|^{2} \diff{x}
            =
            0,
      \end{align*}
      as well as
      \begin{align*}
            \i\gamma_{1} \int_{\mathcal{O}} uv
            \overline{\partial_{x}u} \diff{x}
           + \i\gamma_{1}
            \int_{\mathcal{O}} \overline{u}v
            \partial_{x}u \diff{x}=&~
            \i\gamma_{1} \int_{\mathcal{O}} v\big(u
            \overline{\partial_{x}u}
            +
            \overline{u}\partial_{x}u\big) \diff{x}
            =
            \i\gamma_{1} \int_{\mathcal{O}} v
            \partial_{x}|u|^{2} \diff{x},
      \end{align*}      
      which implies 
      \begin{align}
      \label{eq:UpartialxU}
      &~\diff\int_{\mathcal{O}} {\rm{Im}}\big(u(t,x)
            \overline{\partial_{x}u(t,x)}\big)\diff{x}\nonumber\\
            =&~ {\rm{Im}}\bigg(
            \int_{\mathcal{O}}(\Phi_1 \diff{W_1(t)} \overline{\partial_x u}
            + u \overline{\partial_x \Phi_1 \diff{W_1(t)}} )\diff{x}
            +
      \sum_{i=1}^\infty\int_{\mathcal{O}}\overline{\partial_x\Phi_1e_{1}^{(i)}}\Phi_1e_{1}^{(i)} \diff{x}\diff{t}\bigg)\nonumber\\
      &~-
      \gamma_{1} {\rm{Re}}\bigg(\int_{\mathcal{O}} v
            \partial_{x}|u|^{2} \diff{x}\bigg)\diff t.         
       \end{align}       
      On the other hand, \eqref{eq:SKeq} further gives 
      \begin{align}\label{eq:VV}
            \diff{\langle v,v \rangle} \notag
            =&~
            \big(-2\langle \partial_{x}^{3}v,v \rangle
            +
            2\gamma_{2}\langle \partial_{x}|u|^{2},v \rangle
            -
            2\langle v\partial_{x}v,v \rangle \big)\diff{t} 
         + 
            2\langle \Phi_{2}\diff{W_{2}(t)},v \rangle \notag
            + 
            \|\Phi_{2}\|_{\mathcal L_2^{0}}^{2}\diff{t}
            \\=&~
            2\gamma_{2}\int_{\mathcal{O}}
            v\partial_{x}|u|^{2}\diff{x}\diff{t} 
            + 
            2\langle \Phi_{2}\diff{W_{2}(t)},v \rangle 
            + 
            \|\Phi_{2}\|_{\mathcal L_2^{0}}^{2}\diff{t},
      \end{align}
      where we have used $\langle v\partial_{x}v,v \rangle = 0$ and $\int_{\mathcal{O}} \partial_{x}v
            \partial_{x}^{2}v \diff{x}=0.$
      Combining \eqref{eq:UpartialxU} and \eqref{eq:VV}, and then taking expectations, gives the desired result.
\end{proof}

We now turn to the energy functional associated with the stochastic system \eqref{eq:SKeq}, whose evolution law is more delicate due to the nonlinear coupling terms.

\begin{lemma}
\label{thm:existunique}
      Assume that $u_{0} \in H^{1}$, $v_{0} \in H_{\R}^{1}$, $\gamma_{1}\gamma_{2} > 0$, $\Phi_{1} \in \mathcal{L}_{2}(H_{\R};H^{1})$, and $\Phi_{2} \in \mathcal{L}_{2}(H_{\R};H_{\R}^{1})$. Then for any $t \in [0,T]$, 
      \begin{align*}
            &~\E\bigg[\int_{\mathcal{O}} |\partial_{x}u(t)|^{2}
            + \frac{\beta}{2}|u(t)|^{4} + \gamma_{1}|u(t)|^{2}v(t)
            -
            \frac{\gamma_{1}}{6\gamma_{2}}(v(t))^{3} 
            + 
            \frac{\gamma_{1}}{2\gamma_{2}}|
            \partial_{x}v(t)|^{2} \diff{x}\bigg]
        \\=&~
            \E\bigg[\int_{\mathcal{O}} |\partial_{x}u_{0}|^{2} 
            +
            \frac{\beta}{2}|u_{0}|^{4}
            +
            \gamma_{1}|u_{0}|^{2}v_{0}
            -
            \frac{\gamma_{1}}{6\gamma_{2}}(v_{0})^{3} 
            +
            \frac{\gamma_{1}}{2\gamma_{2}}
            |\partial_{x}v_{0}|^{2} \diff{x}\bigg]
            \\&~+
            \bigg(\|\Phi_{1}\|_{\mathcal L_2^{1}}^{2}
            +
            \frac{\gamma_{1}}{2\gamma_{2}}
            \|\Phi_{2}\|_{\mathcal L_2^{1}}^{2}\bigg)t
            +
            \gamma_{1}\E\bigg[\int_{0}^{t}\int_{\mathcal{O}} 
             v\sum_{i=1}^{\infty} |\Phi_{1}e_{1}^{(i)}|^{2} 
             \diff{x}\diff{s} \bigg]
            \\&~+
            \beta\E\bigg[\int_{0}^{t} \int_{\mathcal{O}} \sum_{i=1}^{\infty}\Re\Big(u^{2}
            \overline{\Phi_{1}e_{1}^{(i)}}^{2}\Big)
            +
            2|u|^{2}|\Phi_{1}e_{1}^{(i)}|^{2} \diff{x}\diff{s}\bigg]            
            \\&~-
            \frac{\gamma_{1}}{2\gamma_{2}}
            \E\bigg[\int_{0}^{t}\int_{\mathcal{O}} v \sum_{i=1}^{\infty}
             (\Phi_{2}e_{2}^{(i)})^{2} \diff{x}\diff{s} \bigg].
      \end{align*} 
\end{lemma}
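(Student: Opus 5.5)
We will apply the Itô formula to each piece of the energy functional separately, in the same way Lemma \ref{lm;exact1} treated the momentum. We then show that all drift contributions coming from the deterministic part of \eqref{eq:SKeq} cancel once they are summed, exactly as in the deterministic conservation of $\mathcal{E}$. What survives is the Itô correction terms and martingales, and the martingales vanish after taking expectations. Concretely, write $\mathcal{E}=\mathcal{E}_1+\dots+\mathcal{E}_5$ with
$\mathcal{E}_1=\|\partial_x u\|^2$, $\mathcal{E}_2=\frac{\beta}{2}\int|u|^4$, $\mathcal{E}_3=\gamma_1\int|u|^2v$, $\mathcal{E}_4=-\frac{\gamma_1}{6\gamma_2}\int v^3$, and $\mathcal{E}_5=\frac{\gamma_1}{2\gamma_2}\|\partial_x v\|^2$. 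For each $\mathcal{E}_j$ we compute the first and second Fréchet derivatives and apply Itô's formula in the real Hilbert space $H\times H_{\R}$, viewing $u$ as a pair of real components. Since $W_1$ and $W_2$ are independent, there are no cross quadratic-variation terms. This is what makes $\mathcal{E}_3$ produce only the second-order term $\gamma_1\int v\sum_i|\Phi_1e_1^{(i)}|^2$.

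The Itô corrections are read off directly:
\begin{itemize}
\item $\mathcal{E}_1$ gives $\|\Phi_1\|_{\mathcal L_2^1}^2$, after adding $\|\Phi_1\|_{\mathcal L_2^0}^2$ if the $H^1$ norm is the full norm. We would check which convention is intended; presumably it is the seminorm part, and we would adopt that.
\item $\mathcal{E}_5$ gives $\frac{\gamma_1}{2\gamma_2}\|\Phi_2\|^2_{\mathcal L_2^1}$.
\item $\mathcal{E}_4$ gives $-\frac{\gamma_1}{2\gamma_2}\int v\sum_i(\Phi_2e_2^{(i)})^2$.
\item $\mathcal{E}_2$ comes from the second derivative of $|u|^4$ in the direction $h$, namely $4|u|^2|h|^2+2(\Re(u\bar h))^2\cdot 4/2$. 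Expanding $(\Re(u\bar h))^2=\frac12(|u|^2|h|^2+\Re(u^2\bar h^2))$ and multiplying by $\frac{\beta}{2}\cdot\frac12$ yields $\beta\sum_i\int\big(\Re(u^2\overline{\Phi_1e_1^{(i)}}^2)+2|u|^2|\Phi_1e_1^{(i)}|^2\big)$. This bookkeeping is the part to double-check carefully.
\end{itemize}

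The stochastic integrals, such as $2\Re\langle\partial_x\Phi_1\diff W_1,\partial_xu\rangle$ and $\langle v^2,\Phi_2\diff W_2\rangle$, are genuine martingales. This follows from the assumed regularity $u,v\in L^2(\Omega;C([0,T];H^1))$ together with the Sobolev embedding $H^1\subset L^\infty$ in one dimension and the a priori moment bounds, so their expectations vanish.

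The main obstacle is verifying that the drift terms cancel, and this is precisely the deterministic energy identity. We will show:
\begin{itemize}
\item The $\mathcal{E}_1$ drift is $2\Re\langle\partial_x(\i\partial_x^2u-\i(\gamma_1v+\beta|u|^2)u),\partial_xu\rangle$. It reduces, after integrating by parts, to $-2\Re\langle \i(\gamma_1 v+\beta|u|^2)u,\partial_x^2u\rangle$.
\item The $\mathcal{E}_2$ and $\mathcal{E}_3$ $u$-drifts involve $\Re(\bar u\,\i\partial_x^2u)$, which cancels the term above. The nonlinear Schrödinger parts are purely imaginary and drop out.
\item The $v$-drift of $\mathcal{E}_3$ is $\gamma_1\int|u|^2(-\partial_x^3v+\gamma_2\partial_x|u|^2-v\partial_xv)$.
\item $\mathcal{E}_4$ and $\mathcal{E}_5$ contribute $-\frac{\gamma_1}{2\gamma_2}\int v^2(\cdots)$ and $-\frac{\gamma_1}{\gamma_2}\int\partial_x^2v(\cdots)$ respectively.
\end{itemize}
Using periodicity and the identities $\int v^3\partial_xv=0$, $\int\partial_x^2v\,\partial_x^3v=0$ and $\int |u|^2\partial_x|u|^2=0$, the cross terms $\gamma_1\int|u|^2\partial_x^3 v$, $\gamma_1\int|u|^2v\partial_xv$ and their counterparts pair off. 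The hypothesis $\gamma_1\gamma_2>0$ is only needed for well-posedness; the cancellation uses the precise coefficient $\gamma_1/\gamma_2$. Since $\partial_x^3 v$ is not in $L^2$ for $H^1$ data, we would perform this computation rigorously on a spectral Galerkin truncation or on smoothed data, and then pass to the limit using the continuity of the solution map. Integrating in time and taking expectations then gives the stated identity.
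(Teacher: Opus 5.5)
Your proposal follows the paper's route. The paper applies It\^{o}'s formula separately to $\|\partial_x u\|^2$, $\int|u|^4$, $\int|u|^2v$, $\int v^3$ and $\|\partial_x v\|^2$. All deterministic drifts then cancel as in the conservation of $\mathcal{E}$, the martingale terms vanish in expectation, and only the It\^{o} corrections survive. Your regularization and moment-bound remarks add rigor that the paper's formal computation omits.

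Two slips must be fixed before the bookkeeping closes.

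\textbf{Hessian of the quartic term.} The Hessian of $|u|^4$ is $4|u|^2|h|^2+8(\Re(u\bar{h}))^2=8|u|^2|h|^2+4\Re(u^2\bar{h}^2)$; for real $u,h$ it must reduce to $12u^2h^2$. With your term $4(\Re(u\bar{h}))^2$, the factor $\frac{\beta}{2}\cdot\frac12$ gives coefficients $\frac{3\beta}{2}$ and $\frac{\beta}{2}$, not the stated $2\beta$ and $\beta$.

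\textbf{Sign of the $\mathcal{E}_1$ drift.} The drift of $\|\partial_x u\|^2$ is
$+2\Re\langle \i(\gamma_1v+\beta|u|^2)u,\partial_x^2u\rangle=-2\int(\gamma_1v+\beta|u|^2)\Re(\i\bar{u}\,\partial_x^2u)\diff x$.
Only with this sign does it cancel the $u$-drifts $2\int(\gamma_1v+\beta|u|^2)\Re(\i\bar{u}\,\partial_x^2u)\diff x$ coming from $\mathcal{E}_2+\mathcal{E}_3$. With your minus sign the two contributions would add instead.

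\textbf{The $\mathcal L_2^1$ constant.} Reading $\|\Phi_k\|_{\mathcal L_2^1}^2$ as the seminorm part $\sum_i\|\partial_x\Phi_k e_k^{(i)}\|^2$ is correct, because that is exactly the It\^{o} correction of $\|\partial_x u\|^2$ and $\|\partial_x v\|^2$. The paper is inconsistent on this point: its differential of $\|\partial_x u\|^2$ carries $\|\Phi_1\|_{\mathcal L_2^0}^2$, it then writes $\|\Phi_1\|_{\mathcal L_2^1}^2$, and its numerical section uses the full $H^1$ norm. With the full norm, the stated constant is off by $\|\Phi_1\|_{\mathcal L_2^0}^2+\frac{\gamma_1}{2\gamma_2}\|\Phi_2\|_{\mathcal L_2^0}^2$.
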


\begin{proof}
      By the It\^{o} formula to $\|\partial_x u\|^2$ and \eqref{eq:SKeq}, we arrive at
      \begin{align*}
            \diff{\|\partial_x u\|^2}
            =&~
            2\Re\big\langle \partial_x u,\i\partial_x^3 u 
            - \i\partial_x\big((\gamma_1v + \beta|u|^2)u\big)\big\rangle\diff{t}  
            +
            2\Re\big\langle \partial_x u,(\partial_x\Phi_1)\diff W_1(t)\big\rangle
            +
            \|\Phi_1\|_{\mathcal L_2^{0}}^2\diff t .
      \end{align*}
      Using integration by parts and the boundary conditions gives 
      $2\Re\big\langle \partial_x u,\i\partial_x^3 u \big\rangle = 0$ and
      \begin{align*}
            2\Re\big\langle \partial_{x} u,
            -\i\partial_{x}((\gamma_{1}v + \beta|u|^2)u)\big\rangle
            =&~
            2\gamma_1\Im\int_{\mathcal{O}}v\overline{u}\,\partial_{x}^{2}u\diff{x}
            +
            2\beta\Im\int_{\mathcal{O}}|u|^2\overline{u}\,\partial_{x}^{2}u\diff{x} .
      \end{align*}
      It follows that
      \begin{align}
            \E\bigg[\int_{\mathcal{O}}|\partial_{x}u|^{2}\diff{x}\bigg]
            =&~
            \E\bigg[\int_{\mathcal{O}}|\partial_{x}u^{0}|^{2}\diff{x}\bigg]
            +
            2\gamma_{1}\Im\E\bigg[\int_{0}^{t}\int_{\mathcal{O}}v\overline{u}
            \,\partial_{x}^{2}u \diff{x}\diff{s}\bigg]
            \notag\\
            &~
            +
            2\beta\Im\E\bigg[\int_{0}^{t}\int_{\mathcal{O}}|u|^{2}\overline{u}
            \,\partial_{x}^{2}u \diff{x}\diff{s}\bigg]
            + \|\Phi_{1}\|_{\mathcal{L}_{2}^{1}}^{2}t .
            \label{eq:exact1}
      \end{align}
      Applying the It\^{o} formula to $\|u\overline{u}\|^{2} = \int_{\mathcal{O}} |u|^{4} \diff{x}$ leads to
      \begin{align*}
            \diff{\|u\overline{u}\|^{2}}
            =&~
            4\Re\big\langle |u|^{2}u,\diff{u}\big\rangle
            +
            \bigg(2\sum_{i=1}^{\infty}\Re\int_{\mathcal{O}}
            u^{2}\overline{\Phi_{1}e_{1}^{(i)}}^{\,2}\diff{x}
            +
            4\sum_{i=1}^{\infty}\int_{\mathcal{O}}|u|^{2}
            |\Phi_{1}e_{1}^{(i)}|^{2}\diff{x}\bigg)\diff{t}.
      \end{align*}
      Since $4\Re\big\langle |u|^{2}u, \diff{u}\big\rangle
      = -4\Im\big\langle |u|^{2}u, \partial_{x}^{2}u \big\rangle\diff{t}
      +
      4\Re\big\langle |u|^{2}u, \Phi_{1}\diff{W_{1}(t)}\big\rangle$,
      we have
      \begin{align}
            \E\bigg[\int_{\mathcal{O}}\frac{\beta}{2}|u|^{4}\diff{x}\bigg]
            =&~
            \frac{\beta}{2}\E\bigg[\int_{\mathcal{O}}|u^{0}|^{4}\diff{x}\bigg]
            -2\beta\int_{0}^{t}\Im\langle |u|^{2}u, 
            \partial_{x}^{2}u \rangle\diff{s}
            \notag\\
            &~
            +
            \beta\E\bigg[\int_{0}^{t}\Re\sum_{i=1}^{\infty}\int_{\mathcal{O}}
            u^{2}\overline{\Phi_{1}e_{1}^{(i)}}^{\,2}
            +
            2|u|^{2}|\Phi_{1}e_{1}^{(i)}|^{2}\diff{x}\diff{s}\bigg].
            \label{eq:exact2}
      \end{align}
      Making use of the It\^{o} formula yields
      \[\diff{\int_{\mathcal{O}}|u|^{2}v\diff{x}}
      =
      \int_{\mathcal{O}}\big(2v\Re(\overline{u}\diff{u})
      + 
      |u|^{2}\diff{v} + v\diff{u}\diff{\overline{u}}\big)\diff{x}.\]
      Substituting \eqref{eq:SKeq} and using integration by parts, we derive
      \begin{align*}
            \diff{\int_{\mathcal{O}} |u|^{2}v \diff{x}}
            =&~
            2\Re\int_{\mathcal{O}}\i v\overline{u}\,\partial_{x}^{2}u\diff{x}\diff{t}
            +
            2\Re\big\langle v\overline{u}, \Phi_{1}\diff{W_{1}}\big\rangle 
            +
            \int_{\mathcal{O}} v\sum_{i=1}^{\infty}
            |\Phi_{1}e_{1}^{(i)}|^{2}\diff{x}\diff{t} \\
            &~
            +
            \int_{\mathcal{O}}\partial_{x}|u|^{2}
            \,\partial_{x}^{2}v \diff{x}\diff{t} 
            +
            \frac{1}{2}\int_{\mathcal{O}} v^{2}\partial_{x}|u|^{2}\diff{x}\diff{t}
            +
            \big\langle |u|^{2},\Phi_{2}\diff{W_2} \big\rangle,
      \end{align*}
      which implies
      \begin{align}
            \E\bigg[\int_{\mathcal{O}}\gamma_{1}|u|^{2}v\diff{x}\bigg]
            =&~
            \E\bigg[\int_{\mathcal{O}}\gamma_{1}|u^{0}|^{2} v^{0}\diff{x}\bigg]
            +
            2\gamma_{1}\Re\E\bigg[\int_{0}^{t}\int_{\mathcal{O}}\i v\overline{u}
            \,\partial_{x}^{2}u \diff{x}\diff{s}\bigg]
            \notag\\
            &~
            +
            \gamma_{1}\E\bigg[\int_{0}^{t}\int_{\mathcal{O}}v\sum_{i=1}^{\infty}
            |\Phi_{1}e_{1}^{(i)}|^{2} \diff{x}\diff{s}\bigg]
            +
            \gamma_{1}\E\bigg[\int_{0}^{t}\int_{\mathcal{O}}\partial_{x}|u|^{2}
            \,\partial_{x}^{2} v\diff{x}\diff{s}\bigg]
            \notag\\
            &~
            +\frac{\gamma_1}{2}\E\bigg[\int_{0}^{t}\int_{\mathcal{O}}
            v^{2}\partial_{x}|u|^{2}\diff{x}\diff{s}\bigg].
            \label{eq:exact3}
      \end{align}
      With the help of the It\^{o} formula, we deduce
      \begin{align*}
            \diff{\int_{\mathcal{O}} v^{3} \diff{x}}
            =&~
            3\int_{\mathcal{O}} \partial_{x} v^{2}\,\partial_{x}^{2} v\diff{x}\diff{t}
            +
            3\gamma_{2}\int_{\mathcal{O}} v^{2}\partial_{x}|u|^{2} \diff{x}\diff{t} 
            \\&~
            +
            3\big\langle v^{2},\Phi_{2}\diff{W_{2}} \big\rangle
            +
            3\int_{\mathcal{O}} v\sum_{i=1}^{\infty}(\Phi_{2}e_{2}^{(i)})^{2}
            \diff{x}\diff{t}.
      \end{align*}
      Consequently,
      \begin{align}
            \E\bigg[\int_{\mathcal{O}} -\frac{\gamma_{1}}{6\gamma_{2}}v^{3}\diff{x}\bigg]
            =&~
            \E\bigg[\int_{\mathcal{O}} -\frac{\gamma_{1}}{6\gamma_{2}}(v^{0})^{3}\diff{x}\bigg]
            -
            \frac{\gamma_{1}}{2\gamma_{2}}\E\bigg[\int_{0}^{t}\int_{\mathcal{O}}
            \partial_{x} v^{2}\,\partial_{x}^{2} v\diff{x}\diff{s}\bigg]
            \notag\\
            &~
            -
            \frac{\gamma_{1}}{2}\E\bigg[\int_{0}^{t}\int_{\mathcal{O}} 
            v^{2}\partial_{x}|u|^{2}\diff{x}\diff{s}\bigg]
            -
            \frac{\gamma_{1}}{2\gamma_{2}}\E\bigg[\int_{0}^{t}\int_{\mathcal{O}}
            v\sum_{i=1}^{\infty}(\Phi_{2}e_{2}^{(i)})^{2}\diff{x}\diff{s}\bigg].
            \label{eq:exact4}
      \end{align}
      Finally, based on the It\^{o} formula to $\|\partial_xV\|^2$ and integration by parts, we get
      \begin{align*}
            \diff{\|\partial_{x}v\|^{2}}
            =&~
            -2\gamma_{2}\int_{\mathcal{O}}\partial_{x}|u|^{2}
            \,\partial_{x}^{2}v\diff{x}\diff{t}
            +
            \int_{\mathcal{O}}\partial_{x}v^{2}\,\partial_{x}^{2} v\diff{x}\diff{t} 
            +
            2\big\langle \partial_{x}v,(\partial_{x}\Phi_{2})\diff{W_2}\big\rangle
            +
            \|\Phi_{2}\|_{\mathcal{L}_2^{1}}^2\diff{t},
      \end{align*}
      and accordingly
      \begin{align}
            \E\bigg[\int_{\mathcal{O}}\frac{\gamma_{1}}{2\gamma_{2}}
            |\partial_{x}v|^{2}\diff{x}\bigg]
            =&~
            \E\bigg[\int_{\mathcal{O}}\frac{\gamma_{1}}{2\gamma_{2}}
            |\partial_{x}v^{0}|^{2}\diff{x}\bigg]
            -
            \gamma_{1}\E\bigg[\int_{0}^{t}\int_{\mathcal{O}}
            \partial_{x}|u|^{2}\,\partial_{x}^{2}v\diff{x}\diff{s}\bigg]
            \notag\\
            &~
            +
            \frac{\gamma_{1}}{2\gamma_{2}}\E\bigg[\int_{0}^{t}\int_{\mathcal{O}}
            \partial_{x}v^{2}\,\partial_{x}^{2}v\diff{x}\diff{s}\bigg]
            +
            \frac{\gamma_{1}}{2\gamma_{2}}\|\Phi_{2}\|_{\mathcal{L}_{2}^{1}}^{2}t .
      \label{eq:exact5}
      \end{align}
      Combining \eqref{eq:exact1}--\eqref{eq:exact5}, we obtain the desired result.
\end{proof}

\section{Structure-preserving temporal discretizations}\label{sec:semidiscrete}
In this section, we present temporal discretizations that inherit the physical properties of the SSKdV equation \eqref{eq:SKeq}. To this end, we first employ a splitting framework that decompose \eqref{eq:SKeq} into the deterministic conservative system \eqref{eq:detsubsystem} and the stochastic subsystem
\begin{equation}\label{eq:stosubsystem}
      \diff{u} = \Phi_{1}\diff{W_{1}(t)},
      \quad 
      \diff{v} = \Phi_{2}\diff{W_{2}(t)}.
\end{equation}
This decomposition decouples the nonlinear conservative deterministic dynamics from the stochastic forcing, and allows each part to be treated by a numerical strategy adapted to its intrinsic structure.

Let $N \in \N$, $\Delta t = \frac{T}{N}$, and define $t_n = n\Delta t$ for $n \in Z_N:=\{0,1,\cdots,N\}$. For $i \in\{1, 2\}$, we denote the increments of the cylindrical Wiener processes by 
$$\Delta W_{k}^{n} := W_{k}(t_{n+1}) - W_{k}(t_n), \quad k = 1,2, n = 0,1,\cdots,N-1.$$ 
Our first temporal discretization is constructed by combining a deterministic--stochastic splitting strategy, the Crank--Nicolson midpoint rule for the deterministic subsystem, and the exact flow of the stochastic subsystem \eqref{eq:stosubsystem}. More precisely, given $(U^0,V^0) = (u^{0},v^{0})$ and an approximation $(U^n,V^n)$ of $(u(t_n),v(t_n))$, we first apply the Crank--Nicolson midpoint discretization to the deterministic subsystem and compute the intermediate variables $(\widetilde U^{n+1},\widetilde V^{n+1})$ through
\begin{equation}\label{eq:temproaldis1}
\left\{\begin{aligned}
&\frac{\widetilde U^{n+1}-U^n}{\Delta t}
=
\i\partial_x^2 U^{n+\frac12}
-
\i\Big(
\gamma_1 U^{n+\frac12}V^{n+\frac12}
+
\beta |U^{n+\frac12}|^2 U^{n+\frac12}
\Big),
\\
&\frac{\widetilde V^{n+1}-V^n}{\Delta t}
=
-\partial_x^3 V^{n+\frac12}
+
\gamma_2\partial_x |U^{n+\frac12}|^2
-
V^{n+\frac12}\partial_x V^{n+\frac12},
\end{aligned}\right.
\end{equation}
where
\begin{equation}\label{eq:temproaldis1UVmid}
U^{n+\frac12}
:=
\frac{U^n+\widetilde U^{n+1}}{2},
\quad
V^{n+\frac12}
:=
\frac{V^n+\widetilde V^{n+1}}{2}.
\end{equation}
After evolving the deterministic part, we incorporate the stochastic contribution by using the exact solution of \eqref{eq:stosubsystem}, namely,
\begin{equation}\label{eq:temproaldis1UVn1}
U^{n+1}
=
\widetilde U^{n+1}
+
\Phi_1\Delta W_1^n,
\quad
V^{n+1}
=
\widetilde V^{n+1}
+
\Phi_2\Delta W_2^n.
\end{equation}
The Crank--Nicolson midpoint rule is particularly useful for preserving the intrinsic conservative structure of the deterministic subsystem. Combined with the exact treatment of the stochastic perturbations, the resulting splitting scheme provides a robust temporal discretization for the original stochastic system and exhibits favorable structure-preserving properties.

Below, we derive several averaged evolution laws associated with the temporal discretization \eqref{eq:temproaldis1}--\eqref{eq:temproaldis1UVn1}.

\begin{lemma}
Under the assumptions of Lemma~\ref{lm;exact1}, the discretization \eqref{eq:temproaldis1}--\eqref{eq:temproaldis1UVn1} preserves the following discrete properties, that is, for all $n\in Z_{N}$,
\begin{align*}
&~\E\bigg[\int_{\mathcal O}|U^n(x)|^2\diff x\bigg]
=
\E\bigg[\int_{\mathcal O}|U^0(x)|^2\diff x\bigg]
+
\|\Phi_1\|_{\mathcal L_2^{0}}^2 t_n,
\\
&~\E\bigg[\int_{\mathcal O}V^n(x)\diff x\bigg]
=
\E\bigg[\int_{\mathcal O}U^0(x)\diff x\bigg],
\\
&~\E\bigg[\int_{\mathcal O}{\rm Im}\big(U^n\overline{\partial_xU^n}\big)
+
\frac{\gamma_1}{2\gamma_2}(V^n)^2\diff x\bigg]\\
=&~
\E\bigg[\int_{\mathcal O}{\rm Im}\big(U^0\overline{\partial_xU^0}\big)
+
\frac{\gamma_1}{2\gamma_2}(U^0)^2\diff x\bigg]
+
\Big(
{\rm Im}\big({\rm Tr}(\Phi_1^*\partial_x\Phi_1)\big)
+
\frac{\gamma_1}{2\gamma_2}\|\Phi_2\|_{\mathcal L_2^{0}}^2
\Big)t_n.
\end{align*}
\end{lemma}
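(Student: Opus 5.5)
The plan is a one-step induction: show that each quantity changes over one step $n\to n+1$ by exactly the constant in the statement times $\Delta t$, then sum over steps. Each step naturally splits into two parts.

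First, the deterministic Crank--Nicolson substep \eqref{eq:temproaldis1} conserves the three quantities exactly, pathwise, in passing from $(U^n,V^n)$ to $(\widetilde U^{n+1},\widetilde V^{n+1})$. Second, the exact stochastic substep \eqref{eq:temproaldis1UVn1} produces the deterministic drift increments after taking expectations. (The right-hand sides in the statement containing $U^0$ in the particle-number and $V^2$ terms are evidently typos for $V^0$, and I will prove the version with $V^0$.)

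\emph{Deterministic substep.} Write $U^{n+\frac12},V^{n+\frac12}$ as in \eqref{eq:temproaldis1UVmid}, and use $\widetilde U^{n+1}-U^n=2(U^{n+\frac12}-U^n)$ when needed.
\begin{itemize}
\item Plasmon number: take the real part of the $L^2$ inner product of the first equation with $U^{n+\frac12}$. Since $\|\widetilde U^{n+1}\|^2-\|U^n\|^2=2\Re\langle \widetilde U^{n+1}-U^n,U^{n+\frac12}\rangle$, it suffices that the right-hand side is $\i$ times a quantity whose inner product with $U^{n+\frac12}$ is real. This holds by periodic integration by parts, because $V^{n+\frac12}$ is real.
\item Particle number: integrate the second equation over $\mathcal O$. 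Every term on the right is an exact $x$-derivative: $\partial_x^3V$, $\partial_x|U|^2$, and $V\partial_xV=\frac12\partial_xV^2$.
\item Momentum: pair the second equation with $\frac{\gamma_1}{\gamma_2}V^{n+\frac12}$, which gives $\frac{\gamma_1}{2\gamma_2}(\|\widetilde V^{n+1}\|^2-\|V^n\|^2)=\gamma_1\int V^{n+\frac12}\partial_x|U^{n+\frac12}|^2\diff x$. For the Schr\"odinger part I use the bilinear identity $\int\Im(\widetilde U\overline{\partial_x\widetilde U})-\int\Im(U\overline{\partial_xU})=2\Re\int(\widetilde U-U)\,\overline{\i\,\partial_xU^{n+\frac12}}\diff x$. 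This follows because $\Im\int a\overline{\partial_x a}\diff x$ is a quadratic form whose polarization is symmetric after integration by parts, and it matches the continuous computation $\diff\Im\int u\overline{\partial_xu}=2\Re\int \diff u\,\overline{\i\partial_xu}$. Substituting the first equation, the $\partial_x^2$ and $\beta|U|^2U$ terms vanish exactly as in the proof of Lemma~\ref{lm;exact1}, and the coupling term gives $-\gamma_1\int V^{n+\frac12}\partial_x|U^{n+\frac12}|^2\diff x$, which cancels the KdV contribution. The cubic term $\langle V\partial_xV,V\rangle$ and $\langle\partial_x^3V,V\rangle$ vanish by periodicity.
\end{itemize}
I expect the main bookkeeping obstacle to be getting the signs and factors of the polarization identity right, so that the coupling terms cancel exactly.

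\emph{Stochastic substep.} Here $U^{n+1}=\widetilde U^{n+1}+\Phi_1\Delta W_1^n$. The intermediate value $\widetilde U^{n+1}$ is $\mathcal F_{t_n}$-measurable (a measurable function of $(U^n,V^n)$, assuming the implicit step is uniquely solvable), while $\Delta W_k^n$ is independent of $\mathcal F_{t_n}$ with mean zero. Expanding the quadratic functionals, the cross terms have zero expectation, and $\E\|\Phi_1\Delta W_1^n\|^2=\|\Phi_1\|_{\mathcal L_2^0}^2\Delta t$. Likewise $\E\,\Im\int\Phi_1\Delta W_1^n\overline{\partial_x\Phi_1\Delta W_1^n}\diff x=\Im\,\mathrm{Tr}(\Phi_1^*\partial_x\Phi_1)\Delta t$ (up to the same conjugation convention used in Lemma~\ref{lm;exact1}), and $\E\|\Phi_2\Delta W_2^n\|^2=\|\Phi_2\|_{\mathcal L_2^0}^2\Delta t$. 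For the particle number the noise increment simply has mean zero. These expectations are finite because $\Phi_k$ is Hilbert--Schmidt into $H^1$, which requires the iterates to have finite second moments in $H^1$; I will assume this is part of the well-posedness of the scheme.

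Summing the one-step identities from $0$ to $n-1$ gives the three stated laws with $t_n=n\Delta t$.
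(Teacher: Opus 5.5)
Your proposal is correct and follows the paper's proof. The paper likewise shows that the Crank--Nicolson substep conserves the three quantities pathwise; your symmetrized polarization identity for $\int{\rm Im}(U\overline{\partial_xU})\diff x$ is the integrated form of the paper's pointwise midpoint identity. It then applies zero-mean and It\^o-isometry computations to the exact stochastic update and sums over steps, as you do. The only slip is a missing factor $\Delta t$ on the right of your $V$-pairing identity. This is harmless, since the same factor appears in the Schr\"odinger contribution that it cancels.
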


\begin{proof}
Using the polarization identity
$
|a|^2-|b|^2
=
\Re\big((a-b)\overline{(a+b)}\big)$, 
\eqref{eq:temproaldis1UVmid} and integration by parts, we obtain
\begin{align*}
\frac{1}{2\Delta t}
\Big(
\|\widetilde U^{n+1}\|^2-\|U^n\|^2
\Big)
&=
\Re\bigg\langle
\frac{\widetilde U^{n+1}-U^n}{\Delta t},
U^{n+\frac12}
\bigg\rangle=0, \quad \int_{\mathcal O}\widetilde V^{n+1}\diff x
=
\int_{\mathcal O}V^n\diff x.
\end{align*}
Define the discrete momentum functional
$$\mathcal M(U,V)
:=
\int_{\mathcal O}
{\rm Im}(U\overline{\partial_xU})
+
\frac{\gamma_1}{2\gamma_2}V^2
\diff x.
$$
By \eqref{eq:temproaldis1} and the identity
\[
\widetilde U^{n+1}\overline{\partial_x\widetilde U^{n+1}}
-
U^n\overline{\partial_xU^n}
=
(\widetilde U^{n+1}-U^n)\overline{\partial_xU^{n+\frac12}}
+
U^{n+\frac12}
\overline{\partial_x(\widetilde U^{n+1}-U^n)},
\]
a direct calculation combined with integration by parts gives 
$\mathcal M(\widetilde U^{n+1},\widetilde V^{n+1})
=
\mathcal M(U^n,V^n).$

We now turn to the stochastic update \eqref{eq:temproaldis1UVn1}. 
Since the Wiener increments have zero mean and covariance $\Delta t$, taking expectations gives
\begin{align*}
\E\big[\|U^{n+1}\|^2\big]
&=
\E\big[\|\widetilde U^{n+1}\|^2\big]
+
\|\Phi_1\|_{\mathcal L_2^{0}}^2\Delta t,\quad 
\E\bigg[\int_{\mathcal O}V^{n+1}\diff x\bigg]
=
\E\bigg[\int_{\mathcal O}\widetilde V^{n+1}\diff x\bigg],
\end{align*}
and
\begin{align*}
\E\big[\mathcal M(U^{n+1},V^{n+1})\big]
&=
\E\big[\mathcal M(\widetilde U^{n+1},\widetilde V^{n+1})\big]
+
\Big(
{\rm Im}\big({\rm Tr}(\Phi_1^*\partial_x\Phi_1)\big)
+
\frac{\gamma_1}{2\gamma_2}\|\Phi_2\|_{\mathcal L_2^{0}}^2
\Big)\Delta t.
\end{align*}
Combining the above relations with the conservation properties of the deterministic substep and iterating over $n$ completes the proof.
\end{proof}


We have therefore derived the desired  evolution laws for the averaged plasmon number, particle number, and momentum at the temporally semi-discrete level. The situation is, however, different for the energy. Unlike the above quantities, the original energy functional is not preserved by the temporal discretization \eqref{eq:temproaldis1}--\eqref{eq:temproaldis1UVn1}. More precisely, while the Crank--Nicolson midpoint approximation maintains the skew-symmetric cancellation mechanism underlying the momentum functional, it fails to exactly preserve the nonlinear variational structure associated with the energy. As a consequence, the intermediate state 
$(\widetilde U^{n+1},\widetilde V^{n+1})$ produced by the deterministic step \eqref{eq:temproaldis1} and \eqref{eq:temproaldis1UVmid} no longer satisfies the energy conservation law of the deterministic system \eqref{eq:detsubsystem}. It follows that the evolution law for the averaged energy cannot be recovered directly from the first temporal discretization.

To overcome this difficulty, we introduce two CSAVs. The key idea is to reformulate the nonlinear energy contributions through two auxiliary scalar variables so that the modified system admits a tractable discrete energy structure. 
We first recall that the deterministic subsystem \eqref{eq:detsubsystem} satisfies the following energy conservation law
\begin{align*}
E(t)
:=
\int_{\mathcal O}
|\partial_xU(t)|^2
+
\frac{\beta}{2}|U(t)|^4
+
\gamma_1|U(t)|^2V(t)
-
\frac{\gamma_1}{6\gamma_2}V(t)^3
+
\frac{\gamma_1}{2\gamma_2}|\partial_xV(t)|^2
\diff x
=
E(0),
\quad t\in[0,T].
\end{align*}
To implement the CSAV reformulation, we define
\begin{align*}
E_1(t)
&:=
\int_{\mathcal O}
\bigg(
\frac{\beta}{2}|U(t)|^4
+
\gamma_1|U(t)|^2V(t)
\bigg)\diff x,\quad 
E_2(t):=
-\frac{\gamma_1}{6\gamma_2}
\int_{\mathcal O}V(t)^3\diff x,
\quad t\in[0,T],
\end{align*}
and introduce two  scalar auxiliary variables $r(t)$ and $s(t)$ satisfying
\begin{equation*}
\left\{
\begin{aligned}
&\frac{\diff r(t)}{\diff t}
=
\alpha_1
\bigg(
-\frac{\diff E_1(t)}{\diff t}
+
r(t)P_1(t)
\bigg),\quad r(0)=1,
\\
&\frac{\diff s(t)}{\diff t}
=
\alpha_2
\bigg(
-\frac{\diff E_2(t)}{\diff t}
+
s(t)P_2(t)
\bigg),\quad s(0)=1,
\end{aligned}
\right.
\end{equation*}
where $\alpha_{1}, \alpha_{2} > 0$ are prescribed CSAV parameters, 
$P_2(t) := -\frac{\gamma_1}{2\gamma_2} \int_{\mathcal O}V^2\partial_tV\diff x,$ and
\begin{align*}
P_1(t)
:=&~
\,{\rm Re}\bigg(2\beta
\int_{\mathcal O}|U|^2U\overline{\partial_tU}\diff{x}
+
2\gamma_1
\int_{\mathcal O}UV\overline{\partial_tU}\diff{x}\bigg)
+
\gamma_1
\int_{\mathcal O}|U|^2\partial_tV\diff x.
\end{align*}
Since $P_{1}(t) = \frac{\diff{E_{1}(t)}}{\diff{t}}$ and $P_{2}(t) = \frac{\diff{E_{2}(t)}}{\diff{t}}$, the above auxiliary equations imply that $r(t) = s(t) = 1$ for all $t \geq 0$. Therefore, we obtain the following modified system
\begin{equation}\label{eq:detsubmodif}
\left\{
\begin{aligned}
&\partial_tU
=
\i\partial_x^2U
-
\i r(t)
\big(
\gamma_1UV+\beta|U|^2U
\big),
\\
&\partial_tV
=
-\partial_x^3V
+
\gamma_2 r(t)\partial_x|U|^2
-
s(t)V\partial_xV,
\end{aligned}
\right.
\end{equation}
which is equivalent to the subsystem \eqref{eq:detsubsystem}.
Moreover, one can verify that the modified energy functional
\begin{align*}
\widetilde E(t)
:=
\int_{\mathcal O}
|\partial_xU(t)|^2
+
\frac{\beta}{2}|U(t)|^4
+
\gamma_1|U(t)|^2V(t)
-
\frac{\gamma_1}{6\gamma_2}V(t)^3
+
\frac{\gamma_1}{2\gamma_2}
|\partial_xV(t)|^2
\diff x
+
\frac{r(t)}{\alpha_1}
+
\frac{s(t)}{\alpha_2}
\end{align*}
is conserved, namely, $\widetilde E(t) = \widetilde E(0)$ for all $t \in [0,T]$.

We now construct the second temporal discretization based on the modified system \eqref{eq:detsubmodif}. Let $(U^{n},V^{n}, r^{n}, s^{n})$ be the numerical approximation to $(u(t_{n}),v(t_{n}), r(t_{n}), s(t_{n}))$. Correspondingly, we define the discrete nonlinear energy components
\begin{align*}
E_{1}^{n}
:=
\int_{\mathcal O} \Big(\frac{\beta}{2}|U^{n}|^4
+ \gamma_1 |U^{n}|^{2} V^{n}\Big)\diff{x},
\quad
E_{2}^{n}
:=
-\frac{\gamma_1}{6\gamma_2}
\int_{\mathcal O} (V^{n})^{3} \diff{x}.
\end{align*}
The evolution from time level $t_n$ to $t_{n+1}$ is carried out through intermediate variables $\widetilde U^{n+1}$ and $\widetilde V^{n+1}$, where $\widetilde U^{n+1}$ satisfies
\begin{equation}\label{eq:timedis2U}
\frac{\widetilde{U}^{n+1}-U^{n}}{\Delta{t}}
=
\i\partial_{x}^{2}U^{n+\frac{1}{2}}
-
\i r^{n+\frac{1}{2}} \big(\gamma_{1}U^{n+\frac{1}{2}}
V^{n+\frac{1}{2}}
+
\beta|U^{n+\frac{1}{2}}|^{2}
U^{n+\frac{1}{2}}\big)
\end{equation}
with
\begin{align*}
U^{n+\frac{1}{2}} := \frac{U^{n} + \widetilde{U}^{n+1}}{2},\quad
V^{n+\frac{1}{2}} := \frac{V^{n} + \widetilde{V}^{n+1}}{2},\quad
r^{n+\frac{1}{2}} := \frac{r^{n} + r^{n+1}}{2}.
\end{align*}
In a similar manner, the intermediate variable $\widetilde V^{n+1}$ is obtained from
\begin{equation}\label{eq:timedis2V}
\frac{\widetilde{V}^{n+1}-V^{n}}{\Delta{t}}
=
-\partial_{x}^{3}V^{n+\frac{1}{2}}
+
\gamma_{2}r^{n+\frac{1}{2}}
\partial_{x}|U^{n+\frac{1}{2}}|^{2}
-
s^{n+\frac{1}{2}}V^{n+\frac{1}{2}}\partial_{x}V^{n+\frac{1}{2}}
\end{equation}
with
$
s^{n+\frac{1}{2}} := \frac{s^{n} + s^{n+1}}{2}.
$
Here, $r^{n+1}$ is determined through
\begin{align}\label{eq:timedis2r}
&\frac{r^{n+1}-r^{n}}{\Delta{t}}\\
=&~
\alpha_{1} \bigg( -\frac{1}{\Delta{t}}
\bigg(\int_{\mathcal O} \frac{\beta}{2}|\widetilde{U}^{n+1}|^{4}
+ \gamma_1 |\widetilde{U}^{n+1}|^{2} \widetilde{V}^{n+1}\diff{x}
-
\int_{\mathcal O} \frac{\beta}{2}|U^{n}|^{4}
+ \gamma_1 |U^{n}|^{2} V^{n} \diff{x}\bigg)
+ r^{n+\frac{1}{2}} P_1^{n+\frac{1}{2}} \bigg),\nonumber
\end{align}
where
\begin{align*}
P_{1}^{n+\frac{1}{2}}
&=
2\beta \Re\int_{\mathcal O} |U^{n+\frac{1}{2}}|^{2} U^{n+\frac{1}{2}}
\overline{\frac{\widetilde{U}^{n+1}-U^{n}}{\Delta{t}}} \diff{x}
+ 2\gamma_{1} \Re\int_{\mathcal O} U^{n+\frac{1}{2}} V^{n+\frac{1}{2}}
\overline{\frac{\widetilde{U}^{n+1}-U^{n}}{\Delta{t}}} \diff{x}\\
&\quad
+\gamma_{1} \int_{\mathcal O} |U^{n+\frac{1}{2}}|^2
\frac{\widetilde{V}^{n+1}-V^{n}}{\Delta{t}} \diff{x}.
\end{align*}
Analogously, the evolution of $s^{n+1}$ is governed by
\begin{equation}\label{eq:timedis2s}
\frac{s^{n+1}-s^{n}}{\Delta{t}}
= \alpha_2 \bigg( -\frac{-\frac{\gamma_1}{6\gamma_2}
\int_{\mathcal O} (\widetilde{V}^{n+1})^{3} - (V^{n})^{3} \diff{x}}{\Delta{t}}
+ s^{n+\frac{1}{2}} P_2^{n+\frac{1}{2}} \bigg),
\end{equation}
where
$
P_2^{n+\frac{1}{2}}
=
-\frac{\gamma_1}{2\gamma_2} \int_{\mathcal O} (V^{n+\frac{1}{2}})^{2}
\frac{\widetilde{V}^{n+1}-V^{n}}{\Delta{t}} \diff{x}.$
The variables $\widetilde U^{n+1}$, $\widetilde V^{n+1}$, $r^{n+1}$, and $s^{n+1}$ are thus determined implicitly and simultaneously. Based on the exact solution of the stochastic linear subsystem, we finally define the CSAV-based temporal discretization by
\begin{equation}\label{eq:timedis2UV}
U^{n+1} := \widetilde{U}^{n+1} + \Phi_{1}\Delta{W}_{1}^{n},
\quad
V^{n+1} := \widetilde{V}^{n+1} + \Phi_{2}\Delta{W}_{2}^{n}.
\end{equation}
To the best of our knowledge, this CSAV-based scheme \eqref{eq:timedis2U}--\eqref{eq:timedis2UV} appears to be new even for the corresponding deterministic Schr\"{o}dinger--KdV system. We next establish its averaged energy evolution law.

\begin{lemma}
Under the assumptions of Lemma \ref{thm:existunique}, for any $n \in Z_{N}$, the CSAV-based temporal discretization \eqref{eq:timedis2U}--\eqref{eq:timedis2UV} satisfies
\begin{align*}
&~\E\bigg[\int_{\mathcal O} |\partial_xU^n|^2+\frac{\beta}{2}|U^n|^4
+\gamma_1|U^n|^2V^n-\frac{\gamma_1}{6\gamma_2}(V^n)^3
+\frac{\gamma_1}{2\gamma_2}|\partial_xV^n|^2 \diff x\bigg]
\\
=&~\E\bigg[\int_{\mathcal O} |\partial_xU^0|^2+\frac{\beta}{2}|U^0|^4
+\gamma_1|U^0|^2V^0-\frac{\gamma_1}{6\gamma_2}(V^0)^3
+\frac{\gamma_1}{2\gamma_2}|\partial_xV^0|^2 \diff x\bigg]
\\
&~+\bigg(\|\Phi_1\|_{\mathcal L_2^{1}}^2
+\frac{\gamma_1}{2\gamma_2}\|\Phi_2\|_{\mathcal L_2^{1}}^2\bigg)t_n
+\gamma_1\sum_{j=0}^{n-1}\E\bigg[\int_{\mathcal O}V^{j+1}
\sum_{i=1}^\infty|\Phi_1e_1^{(i)}|^2\diff x\bigg]\Delta t
\\
&~-\frac{\gamma_1}{2\gamma_2}\sum_{j=0}^{n-1}
\E\bigg[\int_{\mathcal O}V^{j+1}
\sum_{i=1}^\infty(\Phi_2e_2^{(i)})^2\diff x\bigg]\Delta t
+\frac{r^0-r^n}{\alpha_1}+\frac{s^0-s^n}{\alpha_2}
\\
&~+\beta\Delta t\sum_{j=0}^{n-1}\E\bigg[\int_{\mathcal O}\sum_{i=1}^\infty
{\rm Re}\Big((U^{j+1})^2\overline{\Phi_1e_1^{(i)}}^{\,2}\Big)
+2|U^{j+1}|^2|\Phi_1e_1^{(i)}|^2\diff x\bigg].
\end{align*}
\end{lemma}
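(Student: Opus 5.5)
The plan is to prove a one-step identity and then sum over $j=0,\dots,n-1$. Each step from $t_j$ to $t_{j+1}$ is split into the deterministic CSAV substep \eqref{eq:timedis2U}--\eqref{eq:timedis2s} and the stochastic update \eqref{eq:timedis2UV}. I would show first that the substep exactly conserves the discrete modified energy
\[
\widetilde{\mathcal E}(U,V,r,s):=\|\partial_xU\|^2+E_1(U,V)+E_2(V)+\tfrac{\gamma_1}{2\gamma_2}\|\partial_xV\|^2+\tfrac{r}{\alpha_1}+\tfrac{s}{\alpha_2},
\]
that is, $\widetilde{\mathcal E}(\widetilde U^{n+1},\widetilde V^{n+1},r^{n+1},s^{n+1})=\widetilde{\mathcal E}(U^n,V^n,r^n,s^n)$. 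Second, I would compute the expected change of the original energy under the additive update, using that $\Delta W_k^n$ is independent of $\mathcal F_{t_n}$ while $(\widetilde U^{n+1},\widetilde V^{n+1},r^{n+1},s^{n+1})$ are $\mathcal F_{t_n}$-measurable. Since $r,s$ are untouched by the noise, summing the two identities and telescoping yields the statement. The terms $r^0-r^n$ and $s^0-s^n$ appear because $\tfrac{r}{\alpha_1}+\tfrac{s}{\alpha_2}$ has been moved to the right-hand side.

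\emph{Deterministic substep.} Write $D_tU:=(\widetilde U^{n+1}-U^n)/\Delta t$ and $D_tV:=(\widetilde V^{n+1}-V^n)/\Delta t$.
\begin{itemize}
\item \emph{Schr\"odinger part.} Rewrite \eqref{eq:timedis2U} as $\i D_tU=-\partial_x^2U^{n+\frac12}+r^{n+\frac12}(\gamma_1U^{n+\frac12}V^{n+\frac12}+\beta|U^{n+\frac12}|^2U^{n+\frac12})$ and take $2\Re\langle\cdot,D_tU\rangle$. The left side vanishes. Integration by parts turns the Laplacian term into $(\|\partial_x\widetilde U^{n+1}\|^2-\|\partial_xU^n\|^2)/\Delta t$. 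The nonlinear term gives $r^{n+\frac12}\big(P_1^{n+\frac12}-\gamma_1\int|U^{n+\frac12}|^2D_tV\,\dif x\big)$.
\item \emph{KdV part.} Write \eqref{eq:timedis2V} in conservation form $D_tV=-\partial_xG$, with $G:=\partial_x^2V^{n+\frac12}-\gamma_2r^{n+\frac12}|U^{n+\frac12}|^2+\tfrac12 s^{n+\frac12}(V^{n+\frac12})^2$. Pair it with $-\tfrac{\gamma_1}{\gamma_2}G$ and use $\langle\partial_xG,G\rangle=0$ under periodicity. Expanding $\langle D_tV,-\tfrac{\gamma_1}{\gamma_2}G\rangle=0$ gives $\tfrac{\gamma_1}{2\gamma_2}(\|\partial_x\widetilde V^{n+1}\|^2-\|\partial_xV^n\|^2)/\Delta t+\gamma_1r^{n+\frac12}\int|U^{n+\frac12}|^2D_tV\,\dif x+s^{n+\frac12}P_2^{n+\frac12}=0$.
\end{itemize}
Adding the two relations cancels the mixed term $\gamma_1 r^{n+\frac12}\int|U^{n+\frac12}|^2D_tV$. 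We are left with the quadratic energy increment plus $r^{n+\frac12}P_1^{n+\frac12}+s^{n+\frac12}P_2^{n+\frac12}$ equal to zero. Inserting \eqref{eq:timedis2r} and \eqref{eq:timedis2s} in the form $r^{n+\frac12}P_1^{n+\frac12}=\frac{r^{n+1}-r^n}{\alpha_1\Delta t}+\frac{E_1^{\sim}-E_1^n}{\Delta t}$ (and analogously for $s$) produces exactly the conservation of $\widetilde{\mathcal E}$.

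\emph{Stochastic update.} Set $\xi:=\Phi_1\Delta W_1^n=\sum_i\Phi_1e_1^{(i)}\Delta\beta_1^{(i)}$ and $\eta:=\Phi_2\Delta W_2^n$; they are centered Gaussian, independent of each other and of $\mathcal F_{t_n}$.
\begin{itemize}
\item The quadratic terms give $\E\|\partial_x\xi\|^2=\|\Phi_1\|^2_{\mathcal L_2^1}\Delta t$ and the analogous term $\tfrac{\gamma_1}{2\gamma_2}\|\Phi_2\|^2_{\mathcal L_2^1}\Delta t$ for $V$.
\item For the cubic term, $\E(\widetilde V+\eta)^3=\widetilde V^3+3\widetilde V\,\E\eta^2$, and $\E[V^{n+1}]=\widetilde V$ lets me rewrite this in terms of $V^{n+1}$ as stated.
\item The coupling term $|U|^2V$ likewise yields $\gamma_1\widetilde V\,\E|\xi|^2=\gamma_1\E[V^{n+1}]\sum_i|\Phi_1e_1^{(i)}|^2\Delta t$, since odd moments vanish and the two noises are independent.
\end{itemize}

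\emph{Main obstacle: the quartic term.} Expanding gives $\E|\widetilde U+\xi|^4=|\widetilde U|^4+4|\widetilde U|^2\E|\xi|^2+2\Re(\widetilde U^2\E\overline{\xi}^2)+\E|\xi|^4$. The $O(\Delta t^2)$ fourth moment $\tfrac\beta2\E|\xi|^4$ must be reconciled with the statement, whose correction is written with $U^{j+1}$ rather than $\widetilde U^{j+1}$. I would replace $\widetilde U$ by $U^{n+1}-\xi$, i.e. compute $\E[|U^{n+1}|^2]|\Phi_1e_1^{(i)}|^2$ and $\E[(U^{n+1})^2]\overline{\Phi_1e_1^{(i)}}^{\,2}$. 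Then I would check via the Isserlis formula for the real Gaussian coefficients $\Delta\beta_1^{(i)}$ that the extra $O(\Delta t^2)$ terms so produced coincide with $\tfrac\beta2\E|\xi|^4$. If they do not match exactly, I would track the residual and verify whether the statement's form absorbs it. This bookkeeping is the step I expect to need the most care.
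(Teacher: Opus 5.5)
Your skeleton matches the paper's proof: exact conservation of the modified energy in the CSAV substep, then the expected change under the additive update, then telescoping. Your substep argument is correct, and it is a tidier version of the paper's ``$Error=0$'' computation. That argument pairs \eqref{eq:timedis2U} with $D_tU$, writes \eqref{eq:timedis2V} as $D_tV=-\partial_xG$, and pairs it with $-\frac{\gamma_1}{\gamma_2}G$. The gradient, coupling and cubic noise terms are also fine. The genuine gap is the quartic term, which you leave open, and your fallback that ``the statement's form absorbs'' the residual fails.

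Put $\phi_i:=\Phi_1e_1^{(i)}$, $S:=\sum_i|\phi_i|^2$, $T:=\sum_i\phi_i^2$, $\xi:=\Phi_1\Delta W_1^n$ and $\widetilde U:=\widetilde U^{n+1}$. The Isserlis formula gives $\E|\xi|^4=\Delta t^2(2S^2+|T|^2)$ pointwise. Your (correct) expansion therefore yields
\[
\tfrac{\beta}{2}\E\big[|U^{n+1}|^4-|\widetilde U|^4\,\big|\,\mathcal F_{t_n}\big]
=\beta\Delta t\big(\Re(\widetilde U^2\overline{T})+2|\widetilde U|^2S\big)+\tfrac{\beta}{2}\Delta t^2\big(2S^2+|T|^2\big).
\]
Substituting $U^{n+1}=\widetilde U+\xi$ into the statement's correction instead gives
\[
\beta\Delta t\,\E\Big[\sum_i\Re\big((U^{n+1})^2\overline{\phi_i}^{\,2}\big)+2|U^{n+1}|^2|\phi_i|^2\,\Big|\,\mathcal F_{t_n}\Big]
=\beta\Delta t\big(\Re(\widetilde U^2\overline{T})+2|\widetilde U|^2S\big)+\beta\Delta t^2\big(2S^2+|T|^2\big).
\]
So the substitution produces $\beta\E|\xi|^4$, twice the required $\frac{\beta}{2}\E|\xi|^4$. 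A single noise mode with $\widetilde U=0$ already shows this: the true value is $\frac{3\beta}{2}|\phi_1|^4\Delta t^2$, while the statement's term gives $3\beta|\phi_1|^4\Delta t^2$.

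Consequently the identity as stated is false. Its quartic correction overshoots the true increment by $\frac{\beta}{2}t_n\Delta t\int_{\mathcal O}(2S^2+|T|^2)\diff x$, which is nonzero whenever $\beta\neq0$ and $\Phi_1\neq0$. The paper's proof has exactly this defect: it asserts \eqref{eq:energy_aux5} from the two second moments alone and never accounts for $\E|\xi|^4$. No bookkeeping will close your gap.

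What your argument does prove is a corrected law, in either of two equivalent forms:
\begin{itemize}
\item keep $\widetilde U^{j+1}$ in the quartic correction and add $\frac{\beta}{2}\Delta t^2\int_{\mathcal O}(2S^2+|T|^2)\diff x$ per step; or
\item keep $U^{j+1}$ and subtract that same amount per step.
\end{itemize}

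Two smaller points to fix when telescoping:
\begin{itemize}
\item The auxiliary terms should read $\E[r^0-r^n]/\alpha_1$ and $\E[s^0-s^n]/\alpha_2$, since $r^n,s^n$ are random in general.
\item $\E\|\partial_x\xi\|^2=\Delta t\sum_i\|\partial_x\phi_i\|^2$, which equals $\|\Phi_1\|_{\mathcal L_2^1}^2\Delta t$ only if $\mathcal L_2^1$ is taken with the $H^1$ seminorm. The paper's numerical section uses the full $H^1$ norm.
\end{itemize}
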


\begin{proof}
We first prove the pathwise modified energy conservation law
\begin{align}\label{eq:detsubenergycon_new}
&~\int_{\mathcal O} |\partial_x\widetilde U^{n+1}|^2
+\frac{\beta}{2}|\widetilde U^{n+1}|^4
+\gamma_1|\widetilde U^{n+1}|^2\widetilde V^{n+1}
-\frac{\gamma_1}{6\gamma_2}(\widetilde V^{n+1})^3
+\frac{\gamma_1}{2\gamma_2}|\partial_x\widetilde V^{n+1}|^2 \diff x
+\frac{r^{n+1}}{\alpha_1}+\frac{s^{n+1}}{\alpha_2}
\notag
\\
=&~\int_{\mathcal O} |\partial_xU^n|^2
+\frac{\beta}{2}|U^n|^4+\gamma_1|U^n|^2V^n
-\frac{\gamma_1}{6\gamma_2}(V^n)^3
+\frac{\gamma_1}{2\gamma_2}|\partial_xV^n|^2 \diff x
+\frac{r^n}{\alpha_1}+\frac{s^n}{\alpha_2}.
\end{align} 
Indeed, by integration by parts and \eqref{eq:timedis2U},
\begin{align*}
\int_{\mathcal O}\big(|\partial_x\widetilde U^{n+1}|^2-|\partial_xU^n|^2\big)\diff x
=&~-2\Re\int_{\mathcal O}(\overline{\widetilde U^{n+1}}-\overline U^n)
\partial_x^2U^{n+\frac12}\diff x
\\
=&~-2r^{n+\frac12}\Re\int_{\mathcal O}
(\overline{\widetilde U^{n+1}}-\overline U^n)
\big(\gamma_1V^{n+\frac12}+\beta|U^{n+\frac12}|^2\big)
U^{n+\frac12}\diff x .
\end{align*}
Similarly, using \eqref{eq:timedis2V} yields
\begin{align*}
&~\frac{\gamma_1}{2\gamma_2}\int_{\mathcal O}
\big(|\partial_x\widetilde V^{n+1}|^2-|\partial_xV^n|^2\big)\diff x
\\=&~
-\frac{\gamma_1}{\gamma_2}\Delta t
\int_{\mathcal O}
\Big(\gamma_2r^{n+\frac12}\partial_x|U^{n+\frac12}|^2
-s^{n+\frac12}V^{n+\frac12}\partial_xV^{n+\frac12}\Big)
\partial_x^2V^{n+\frac12}\diff x .
\end{align*}
Moreover, \eqref{eq:timedis2r}--\eqref{eq:timedis2s} show
\begin{align*}
\frac{r^{n+1}-r^n}{\alpha_1}
=&~-\int_{\mathcal O}\Big(
\frac{\beta}{2}|\widetilde U^{n+1}|^4
+\gamma_1|\widetilde U^{n+1}|^2\widetilde V^{n+1}
-\frac{\beta}{2}|U^n|^4-\gamma_1|U^n|^2V^n
\Big)\diff x+\Delta tr^{n+\frac12}P_1^{n+\frac12},
\\
\frac{s^{n+1}-s^n}{\alpha_2}
=&~\frac{\gamma_1}{6\gamma_2}
\int_{\mathcal O}\big((\widetilde V^{n+1})^3-(V^n)^3\big)\diff x
+\Delta ts^{n+\frac12}P_2^{n+\frac12}.
\end{align*}
It follows from the above identities that
\begin{align*}
Error
:=&~\eqref{eq:detsubenergycon_new}\text{(LHS)}
-\eqref{eq:detsubenergycon_new}\text{(RHS)}
\\
=&~-\frac{\gamma_1}{\gamma_2}\Delta t
\int_{\mathcal O}
\Big(\gamma_2r^{n+\frac12}\partial_x|U^{n+\frac12}|^2
-s^{n+\frac12}V^{n+\frac12}\partial_xV^{n+\frac12}\Big)
\partial_x^2V^{n+\frac12}\diff x
\\
&~+\gamma_1r^{n+\frac12}
\int_{\mathcal O}|U^{n+\frac12}|^2
(\widetilde V^{n+1}-V^n)\diff x-\frac{\gamma_1}{2\gamma_2}s^{n+\frac12}
\int_{\mathcal O}(V^{n+\frac12})^2
(\widetilde V^{n+1}-V^n)\diff x .
\end{align*}
Integrating by parts gives
\begin{align*}
Error
=&~\gamma_1r^{n+\frac12}\Delta t
\int_{\mathcal O}|U^{n+\frac12}|^2
\partial_x^3V^{n+\frac12}\diff x-\frac{\gamma_1}{2\gamma_2}s^{n+\frac12}\Delta t
\int_{\mathcal O}(V^{n+\frac12})^2
\partial_x^3V^{n+\frac12}\diff x .
\end{align*}
Since \eqref{eq:timedis2V} implies
\[
\partial_x^3V^{n+\frac12}
=-\frac{\widetilde V^{n+1}-V^n}{\Delta t}
+\gamma_2r^{n+\frac12}\partial_x|U^{n+\frac12}|^2
-s^{n+\frac12}V^{n+\frac12}\partial_xV^{n+\frac12},
\]
we obtain $Error=0$, and hence \eqref{eq:detsubenergycon_new} follows. 
Next, using \eqref{eq:timedis2UV} and It\^o isometry leads to
\begin{align}
\E\big[\|\partial_xU^{n+1}\|^2\big]
=&~\E\big[\|\partial_x\widetilde U^{n+1}\|^2\big]
+\|\Phi_1\|_{\mathcal L_2^{1}}^2\Delta t,
\label{eq:energy_aux1}
\\
\E\bigg[\frac{\gamma_1}{2\gamma_2}\|\partial_xV^{n+1}\|^2\bigg]
=&~\E\bigg[\frac{\gamma_1}{2\gamma_2}
\|\partial_x\widetilde V^{n+1}\|^2\bigg]
+\frac{\gamma_1}{2\gamma_2}
\|\Phi_2\|_{\mathcal L_2^{1}}^2\Delta t .
\label{eq:energy_aux2}
\end{align}
Similarly,  
\begin{align}
&~\E\bigg[\int_{\mathcal O}\gamma_1|U^{n+1}|^2V^{n+1}\diff x\bigg]
=\E\bigg[\int_{\mathcal O}
\gamma_1|\widetilde U^{n+1}|^2\widetilde V^{n+1}\diff x
+\gamma_1\Delta t\int_{\mathcal O}V^{n+1}
\sum_{i=1}^\infty|\Phi_1e_1^{(i)}|^2\diff x\bigg],
\label{eq:energy_aux3}\\
&~\E\bigg[\int_{\mathcal O}
-\frac{\gamma_1}{6\gamma_2}(V^{n+1})^3\diff x\bigg]
=\E\bigg[\int_{\mathcal O}
-\frac{\gamma_1}{6\gamma_2}(\widetilde V^{n+1})^3\diff x
-\frac{\gamma_1}{2\gamma_2}\Delta t
\int_{\mathcal O}V^{n+1}
\sum_{i=1}^\infty(\Phi_2e_2^{(i)})^2\diff x\bigg].
\label{eq:energy_aux4}
\end{align}
For the quartic term, expanding
$|U^{n+1}|^2
=
|\widetilde U^{n+1}|^2
+ |\Phi_1\Delta W_1^n|^2
+ 2{\rm Re}(\widetilde U^{n+1}\overline{\Phi_1\Delta W_1^n})$
and using
\[
\E\big[|\Phi_1\Delta W_1^n|^2\big]
=\Delta t\sum_{i=1}^\infty|\Phi_1e_1^{(i)}|^2,
\quad
\E\big[(\Phi_1\Delta W_1^n)^2\big]
=\Delta t\sum_{i=1}^\infty(\Phi_1e_1^{(i)})^2,
\]
we derive 
\begin{align}
&~\E\bigg[\int_{\mathcal O}\frac{\beta}{2}|U^{n+1}|^4\diff x\bigg]
\notag
\\
=&~\E\bigg[\int_{\mathcal O}
\frac{\beta}{2}|\widetilde U^{n+1}|^4\diff x\bigg]
+\beta\Delta t\E\bigg[\int_{\mathcal O}\sum_{i=1}^{\infty}
{\rm Re}\Big((U^{n+1})^2\overline{\Phi_1e_1^{(i)}}^{\,2}\Big)
+2|U^{n+1}|^2|\Phi_1e_1^{(i)}|^2\diff x\bigg].
\label{eq:energy_aux5}
\end{align}
Finally, using \eqref{eq:detsubenergycon_new}--\eqref{eq:energy_aux5} completes the proof. 
\end{proof}

\section{Structure-preserving full discretizations based on LDG method}\label{sec:fulldiscrete}
In this section, we construct fully discrete schemes for the SSKdV equation by applying the LDG method in space. 
To introduce the LDG numerical fluxes, we first fix some notation. Let \(N_x \in \mathbb{N}\), \(h=\frac{L}{N_x}>0\), and partition the closure \(\overline{\mathcal O} = [0,L]\) into a family of uniform mesh cells \(K_j=[x_{j-\frac12},x_{j+\frac12}]\) with the cell center \(x_j=\frac{x_{j-\frac12}+x_{j+\frac12}}{2}\) and mesh size \(h_j=x_{j+\frac12}-x_{j-\frac12}\) for \(j = 1,2,\cdots,N_{x}\). Letting $\mathcal T_h := \{K_j:\ j=1,2,\cdots,N_x\}$, 
the collection of all element boundaries is denoted by \(\Gamma=\cup_{K\in\mathcal{T}_h}\partial K\), and the set of interior interfaces is defined by \(\Gamma_0 = \Gamma\setminus\partial\mathcal O\). For any function \(u\), the quantities \(u_{j+\frac12}^{-}\) and \(u_{j+\frac12}^{+}\) denote the traces of \(u\) at the interface \(x_{j+\frac12}\), taken from the left element \(K_j\) and the right element \(K_{j+1}\), respectively. 
We use the following local inner-product notation: 
\begin{align*}
\langle f,g \rangle_{K}:=\int_{K} f(x)\overline{g(x)}\,\diff x, \quad \langle f,g \rangle_{\partial K_j}:=(f\overline{g})_{j+\frac12}^{-}-(f\overline{g})_{j-\frac12}^{+},
\end{align*} 
where \(\overline{g}\) denotes the complex conjugate of \(g\). 
In addition, let \(\mathcal{P}_r^k(K)\) and \(\mathcal{P}_c^k(K)\) denote the spaces of real-valued and complex-valued polynomials of degree at most \(k\ge0\) on \(K\in\mathcal{T}_h\), respectively. Based on these local polynomial spaces, we define the finite element spaces
\[
V_h^r
:=
\left\{
\phi \in H_{\R}
:\,
\phi|_K \in \mathcal{P}_r^k(K),
\ \forall K \in \mathcal{T}_h
\right\},
\]
\[
V_h^c
:=
\left\{
\psi \in H
:\,
\psi|_K \in \mathcal{P}_c^k(K),
\ \forall K \in \mathcal{T}_h
\right\}.
\]
Functions in \(V_h^r\) and \(V_h^c\) are allowed to be discontinuous across element interfaces.

By introducing the LDG auxiliary variables
\begin{equation}\label{eq:spaceauxiliaryvs}
\begin{gathered}
      R^{n+\frac{1}{2}}
      :=
      \partial_{x}U^{n+\frac{1}{2}},
      \quad
      P^{n+\frac{1}{2}}
      :=
      \partial_{x}V^{n+\frac{1}{2}}, 
      \quad
      Q^{n+\frac{1}{2}}
      :=
      \partial_{x}P^{n+\frac{1}{2}},   
      \\
      S^{n+\frac{1}{2}}
      :=
      |U^{n+\frac{1}{2}}|^{2},
      \quad
      X^{n+\frac{1}{2}}
      :=
      \bigl(V^{n+\frac{1}{2}}\bigr)^{2},
\end{gathered}
\end{equation}
the temporal scheme \eqref{eq:temproaldis1} can be rewritten in the following equivalent first-order form
\begin{equation*}
\left\{
\begin{aligned}
      &\frac{\widetilde{U}^{n+1}-U^{n}}{\Delta t}
      =
      \i \partial_{x}R^{n+\frac{1}{2}}
      -
      \i\Big(
      \gamma_{1}U^{n+\frac{1}{2}}V^{n+\frac{1}{2}}
      +
      \beta S^{n+\frac{1}{2}}U^{n+\frac{1}{2}}
      \Big),
      \\
      &\frac{\widetilde{V}^{n+1}-V^{n}}{\Delta t}
      =
      -\partial_{x}Q^{n+\frac{1}{2}}
      +
      \gamma_{2}\partial_{x}S^{n+\frac{1}{2}}
      -
      \frac{1}{2}\partial_{x}X^{n+\frac{1}{2}},
      \\
      &R^{n+\frac{1}{2}}
      =
      \partial_{x}U^{n+\frac{1}{2}},
      \quad
      P^{n+\frac{1}{2}}
      =
      \partial_{x}V^{n+\frac{1}{2}}, 
      \quad
      Q^{n+\frac{1}{2}}
      =
      \partial_{x}P^{n+\frac{1}{2}}.
\end{aligned}
\right.
\end{equation*}
For each element $K \in \mathcal{T}_{h}$, let $\bm{\nu}$ denote the unit outward normal vector on $\partial K$. We seek the the LDG approximations
\[
U_{h}^{n}, \ \widetilde{U}_{h}^{n+1}, \ R_{h}^{n+\frac{1}{2}}, \ U_{h}^{n+\frac{1}{2}}, \ V_{h}^{n+\frac{1}{2}},
\ S_{h}^{n+\frac{1}{2}}, \ X_{h}^{n+\frac{1}{2}}, \ Q_{h}^{n+\frac{1}{2}}, \ P_{h}^{n+\frac{1}{2}}
\]
to the corresponding variables
\[
U^{n}, \ \widetilde{U}^{n+1}, \ R^{n+\frac{1}{2}}, \ U^{n+\frac{1}{2}}, \ V^{n+\frac{1}{2}},
\ S^{n+\frac{1}{2}}, \ X^{n+\frac{1}{2}}, \ Q^{n+\frac{1}{2}}, \ P^{n+\frac{1}{2}},
\]
respectively. We then define the LDG approximation of \eqref{eq:temproaldis1} as follows: find
\[
\widetilde{U}_{h}^{n+1}, \ R_{h}^{n+\frac{1}{2}} \in V_{h}^{c},
\quad
\widetilde{V}_{h}^{n+1}, \ P_{h}^{n+\frac{1}{2}}, \ Q_{h}^{n+\frac{1}{2}} \in V_{h}^{r}
\]
such that
\begin{equation*}
\left\{\begin{aligned}
      &\bigg\langle \frac{\widetilde{U}_{h}^{n+1}
      - U_{h}^{n}}{\Delta{t}}, \eta\bigg\rangle_{K}
      =
      \i\big\langle \widehat{R}_{h}^{n+\frac{1}{2}}\bm{\nu},
      \eta \big\rangle_{\partial{K}}
      - 
      \i\big\langle R_{h}^{n+\frac{1}{2}}, 
      \partial_{x}\eta \big\rangle_{K}
      \\&~~~~~~~~~~~~~~~~~~~~~~~~~~~~-
      \i\gamma_{1}\big\langle U_{h}^{n+\frac{1}{2}}
      V_{h}^{n+\frac{1}{2}}, \eta\big\rangle_{K}
      -
      \i\beta\big\langle S_{h}^{n+\frac{1}{2}}
      U_{h}^{n+\frac{1}{2}}, \eta\big\rangle_{K},
      \\
            &\bigg\langle 
      \frac{\widetilde{V}_{h}^{n+1}-V_{h}^{n}}{\Delta{t}},\phi\bigg\rangle_{K}
      =
      -\big\langle \widehat{Q}_{h}^{n+\frac{1}{2}}\bm{\nu},
      \phi \big\rangle_{\partial{K}}
      +
      \gamma_{2}\big\langle \widehat{S}_{h}^{n+\frac{1}{2}}\bm{\nu},
      \phi \big\rangle_{\partial{K}}
      -
      \frac{1}{2}\big\langle \widehat{X}_{h}^{n+\frac{1}{2}}\bm{\nu},
      \phi \big\rangle_{\partial{K}}
      \\&~~~~~~~~~~~~~~~~~~~~~~~~~~~~+
      \big\langle Q_{h}^{n+\frac{1}{2}},
      \partial_{x}\phi \big\rangle_{K}
      -
      \gamma_{2}\big\langle S_{h}^{n+\frac{1}{2}},
      \partial_{x}\phi \big\rangle_{K}
      +
      \frac{1}{2}\big\langle X_{h}^{n+\frac{1}{2}},
      \partial_{x}\phi \big\rangle_{K},
      \\
      &\big\langle R_{h}^{n+\frac{1}{2}},
      \xi\big\rangle_{K}
      =
      \big\langle \widehat{U}_{h}^{n+\frac{1}{2}}\bm{\nu},
      \xi \big\rangle_{\partial{K}}
      -
      \big\langle U_{h}^{n+\frac{1}{2}},
      \partial_{x}\xi \big\rangle_{K},
      \\
      &\big\langle P_{h}^{n+\frac{1}{2}}, 
      \varphi\rangle_{K}
      =
      \big\langle \widehat{V}_{h}^{n+\frac{1}{2}}\bm{\nu},\varphi \big\rangle_{\partial{K}}
      -
      \big\langle V_{h}^{n+\frac{1}{2}},
      \partial_{x}\varphi\big\rangle_{K}, 
      \\
      &\big\langle Q_{h}^{n+\frac{1}{2}},
      \psi \big\rangle_{K}
      =
      \big\langle \widehat{P}_{h}^{n+\frac{1}{2}}\bm{\nu},
      \psi\big\rangle_{\partial{K}}
      - \big\langle P_{h}^{n+\frac{1}{2}},
      \partial_{x} \psi\big\rangle_{K} 
\end{aligned}\right.
\end{equation*}
hold for any test function $\eta,\xi \in V_{h}^{c}$ and $\phi,\varphi,\psi \in V_{h}^{r}$. Here, the numerical fluxes 
are chosen as follows:
\begin{align*}
      \widehat{U}_{h}^{n+\frac{1}{2}} = (U_{h}^{n+\frac{1}{2}})^{-},\quad
      \widehat{R}_{h}^{n+\frac{1}{2}} = (R_{h}^{n+\frac{1}{2}})^{+},\quad 
      \widehat{V}_{h}^{n+\frac{1}{2}} = (V_{h}^{n+\frac{1}{2}})^{+},\quad
      \widehat{P}_{h}^{n+\frac{1}{2}} = (P_{h}^{n+\frac{1}{2}})^{-},
\end{align*}
and
\begin{align*}
      \widehat{Q}_{h}^{n+\frac{1}{2}} 
      = 
      \frac{(Q_{h}^{n+\frac{1}{2}})^{+} 
      + (Q_{h}^{n+\frac{1}{2}})^{-}}{2},
      \quad
      \widehat{S}_{h}^{n+\frac{1}{2}} 
      = 
      \frac{(S_{h}^{n+\frac{1}{2}})^{+} 
      + (S_{h}^{n+\frac{1}{2}})^{-}}{2},
      \quad
      \widehat{X}_{h}^{n+\frac{1}{2}} 
      = 
      \frac{(X_{h}^{n+\frac{1}{2}})^{+} 
      + (X_{h}^{n+\frac{1}{2}})^{-}}{2}.
\end{align*}
Finally, by incorporating the stochastic update, we define the numerical approximation $(U_{h}^{n+1},V_{h}^{n+1})$ by
\begin{align}
\label{FD1}
      U_{h}^{n+1}
      =
      \widetilde{U}_{h}^{n+1}
      +
      \Pi_{h}(\Phi_{1}\Delta{W_{1}^{n}}),
      \quad
      V_{h}^{n+1}
      =
      \widetilde{V}_{h}^{n+1}
      +
      \Pi_{h}(\Phi_{2}\Delta{W_{2}^{n}}),      
\end{align}
where $\Pi_h$ denotes the $L^2$-projection onto $V_h^c$ when acting on complex-valued functions and onto $V_h^r$ when acting on real-valued functions.

\begin{lemma}
Under the assumptions of Lemma \ref{lm;exact1}, for any $n\in Z_{N}$,  the full discretization \eqref{FD1} satisfies the following relationships
      \begin{align*}
            &\E\bigg[\sum_{K \in \mathcal{T}_{h}} \int_{K} |U_{h}^{n}|^{2} \diff{x}\bigg]
            =
            \E\bigg[\sum_{K \in \mathcal{T}_{h}} \int_{K} |U_{h}^{0}|^{2} \diff{x}\bigg]
            +
            \|\Phi_{1}\|_{\mathcal L_2^{0}}^{2}t_{n},\\
            &
            \E\bigg[\sum_{K \in \mathcal{T}_{h}} \int_{K} V_{h}^{n} \diff{x}\bigg]
            =
            \E\bigg[\sum_{K \in \mathcal{T}_{h}} \int_{K} V_{h}^{0} \diff{x}\bigg],\\
            &\E\bigg[\sum_{K \in \mathcal{T}_{h}} \int_{K}
            {\rm{Im}}\big(U_{h}^{n}
            \overline{\partial_{x}U_{h}^{n}}\big)
            + 
            \frac{\gamma_{1}}{2\gamma_{2}}
            \big(V_{h}^{n}\big)^{2} \diff{x}\bigg] 
            \\=&~ \notag
            \E\bigg[\sum_{K \in \mathcal{T}_{h}} \int_{K}
            {\rm{Im}}\big(U_{h}^{0}
            \overline{\partial_{x}U_{h}^{0}}\big) 
            + 
            \frac{\gamma_{1}}{2\gamma_{2}} 
            \big(V_{h}^{0}\big)^{2} \diff{x}\bigg] 
           + 
            \Big({\rm{Im}}\big({\rm{Tr}}
            (\Phi_{1}^{*}\partial_{x}\Phi_{1})\big)
            +
            \frac{\gamma_{1}}{2\gamma_{2}}
            \|\Phi_{2}\|_{\mathcal L_2^{0}}^{2}\Big)t_{n}.
      \end{align*}
\end{lemma}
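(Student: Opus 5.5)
The proof mirrors the semi-discrete lemma and splits each step into a deterministic LDG substep and a stochastic projection substep. For the deterministic substep we show pathwise, for every realization, that
\[
\sum_K\|\widetilde U_h^{n+1}\|_{L^2(K)}^2=\sum_K\|U_h^n\|_{L^2(K)}^2,\qquad
\sum_K\int_K\widetilde V_h^{n+1}\diff x=\sum_K\int_K V_h^n\diff x,
\]
and that the discrete momentum $\mathcal M_h(U_h,V_h):=\sum_K\int_K \Im(U_h\overline{\partial_xU_h})+\frac{\gamma_1}{2\gamma_2}V_h^2\diff x$ satisfies $\mathcal M_h(\widetilde U_h^{n+1},\widetilde V_h^{n+1})=\mathcal M_h(U_h^n,V_h^n)$. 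For the stochastic substep we compute the expectation increments produced by \eqref{FD1}.

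\textbf{Particle number.} Take $\phi\equiv1$ in the $V$-equation and sum over $K$. The volume terms vanish. The flux terms telescope under periodicity because the fluxes are single-valued at each interface.

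\textbf{Plasmon number.} Take $\eta=U_h^{n+\frac12}$ in the $U$-equation, sum over $K$, and take the real part. The nonlinear terms are $\i$ times real quantities, so their real parts vanish. The linear part is $\i\big(\sum_K\langle\widehat R_h\bm\nu,U_h\rangle_{\partial K}-\langle R_h,\partial_xU_h\rangle_K\big)$. Take $\xi=R_h^{n+\frac12}$ in the $R$-equation and integrate by parts elementwise. With the alternating fluxes $\widehat U=U^-$ and $\widehat R=R^+$, the combination of boundary terms reduces, after summing over periodic interfaces, to $-\i\sum_K\|R_h\|_K^2$, which is purely imaginary. Hence $\Re\langle \widetilde U_h^{n+1}-U_h^n,U_h^{n+\frac12}\rangle=0$. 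This is the standard LDG energy argument for alternating fluxes.

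\textbf{Momentum.} This is the main obstacle. In the continuous and semi-discrete proofs the cancellation uses the test function $\partial_xU^{n+\frac12}$, which is not a legitimate test function here: it lies in $V_h^c$ elementwise, but the jumps at the interfaces must be tracked.

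1. Use the identity from the semi-discrete proof to write $\mathcal M_h(\widetilde U^{n+1}_h,\cdot)-\mathcal M_h(U^n_h,\cdot)$ through $(\widetilde U_h^{n+1}-U_h^n)$ tested against $\partial_xU_h^{n+\frac12}$. Integrate by parts elementwise, which produces interface jump terms. Then use the $U$-equation with $\eta=\partial_xU_h^{n+\frac12}$ restricted to each element, together with the $R$-equation, to rewrite everything in terms of $R_h$.

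2. The dispersive contribution becomes $\sum_K\int_K\partial_x|R_h|^2$ plus flux terms. These should telescope exactly because of the alternating flux pair.

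3. The coupling term $\gamma_1\Re\int V_h\partial_x|U_h|^2$ must cancel against the $V$-equation tested with $\phi=V_h^{n+\frac12}$. This is where the central fluxes for $S_h$ and $X_h$ enter.

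4. The KdV dispersive and Burgers pieces should vanish. For the dispersive piece, use the $P$- and $Q$-equations with $\widehat V=V^+$, $\widehat P=P^-$, and central $\widehat Q$; the conservative flux $\frac12\widehat X$ handles the Burgers piece.

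If the exact $V_h\partial_x|U_h|^2$ cancellation fails with these fluxes, the fallback is to interpret the momentum as the one induced by the LDG derivative, namely replacing $\partial_xU_h$ by $R_h$. We expect the identities to close in the form stated once the jump terms are paired with the flux choices.

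\textbf{Stochastic step.} Write $\xi_k^n:=\Pi_h(\Phi_k\Delta W_k^n)$. Since $\xi_k^n$ is independent of $\mathcal F_{t_n}$ and has mean zero, cross terms vanish in expectation. Then
\[
\E\|\xi_1^n\|^2=\Delta t\sum_i\|\Pi_h\Phi_1e_1^{(i)}\|^2,
\]
and similarly for the quadratic momentum terms. To obtain exactly the constants $\|\Phi_1\|_{\mathcal L_2^0}^2$ and $\Im\Tr(\Phi_1^*\partial_x\Phi_1)$, we must either assume that $\Phi_k$ maps into $V_h$, so that $\Pi_h$ acts as the identity, or read these constants with $\Phi_k$ replaced by $\Pi_h\Phi_k$. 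We will state this explicitly.

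Finally, summing the per-step identities over $j=0,\dots,n-1$ gives the three laws with $t_n=n\Delta t$.
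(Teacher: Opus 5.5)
Your plasmon-number argument is correct: testing with $\eta=U_h^{n+\frac12}$ and $\xi=R_h^{n+\frac12}$, the alternating pair $\widehat U_h=U_h^-$, $\widehat R_h=R_h^+$ leaves only $-\i\|R_h^{n+\frac12}\|^2$. Your particle-number argument with $\phi\equiv1$ is also correct. Your $\Pi_h$ caveat is genuinely needed, because $\E\|\Pi_h(\Phi_1\Delta W_1^n)\|^2=\Delta t\|\Pi_h\Phi_1\|_{\mathcal L_2^0}^2$ is in general strictly smaller than $\Delta t\|\Phi_1\|_{\mathcal L_2^0}^2$. The paper gives no proof of this lemma beyond the implicit analogy with the semi-discrete one.

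The gap is the momentum step. Your steps 2--4 are hoped-for cancellations, and with the fluxes of this scheme they do not occur. Test the $V$-equation with $\phi=V_h^{n+\frac12}$ and write $[w]_{j+\frac12}:=w^-_{j+\frac12}-w^+_{j+\frac12}$. Take the Burgers terms with $X_h^{n+\frac12}=(V_h^{n+\frac12})^2$, which is automatic for $k=0$. Using $\frac12\int_{K_j}V^2\partial_xV\diff x=\frac16\big((V^-_{j+\frac12})^3-(V^+_{j-\frac12})^3\big)$ and $\widehat X_h=\frac12((V^-)^2+(V^+)^2)$, each interface contributes $-\frac14((V^-)^2+(V^+)^2)(V^--V^+)+\frac16((V^-)^3-(V^+)^3)=-\frac1{12}[V]^3$. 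So $\frac1{2\Delta t}(\|\widetilde V_h^{n+1}\|^2-\|V_h^n\|^2)$ contains the residual $-\frac1{12}\sum_j[V_h^{n+\frac12}]_{j+\frac12}^3$, which equals $-\frac12$ for three P0 cells with values $(1,2,0)$. The central average of $V^2$ is conservative for $\int V$ but not for $\|V\|^2$; the latter would need $\widehat X_h=\frac13((V^-)^2+V^-V^++(V^+)^2)$.

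This residual cannot be compensated:
\begin{itemize}
\item With $U_h^n=0$, $\widetilde U_h^{n+1}=0$ solves the $U$-equation and the coupling terms vanish.
\item Every $V_h^{n+\frac12}$ arises from a $V_h^n$ given explicitly by the $V$-equation.
\item Under $V_h^{n+\frac12}\mapsto\lambda V_h^{n+\frac12}$, the Burgers residual scales like $\lambda^3$ while the dispersive terms scale like $\lambda^2$.
\item No choice of the $U$-part of $\mathcal M_h$ touches it, so your fallback $\partial_xU_h\to R_h$ does not help.
\end{itemize}
After taking expectations, the stochastic update only adds a data-independent multiple of $\Delta t$. Hence the third identity already fails at $n=1$ for suitable deterministic initial data.

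Two other cancellations you invoke also fail. For $k=0$ the elementwise derivative $\partial_xU_h$ vanishes, so the $U$-part of $\mathcal M_h$ is identically zero. The coupling contribution $\gamma_1\Delta t\sum_j(\widehat S_h^{n+\frac12}[V_h^{n+\frac12}])_{j+\frac12}$ to the change of $\mathcal M_h$ then has nothing to cancel against. The same degeneracy makes the noise constant for the $U$-part zero for $k=0$. For $k\ge1$, the KdV dispersive terms with $\widehat V_h=V_h^+$, $\widehat P_h=P_h^-$ and central $\widehat Q_h$ leave $\frac12\sum_j([Q_h][V_h]+[P_h]^2)_{j+\frac12}$, evaluated at $n+\frac12$. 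This vanishes for $k=0$ but not in general: for $k=1$, $N_x\ge3$ and $V_h^{n+\frac12}$ the indicator of a single cell, it equals $-9/h^2$.

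So the momentum law cannot be proved for the scheme as written. A correct version would need different fluxes, namely a conservative $\widehat X_h$ and a conservative triple such as $\widehat V_h=V_h^+$, central $\widehat P_h$, $\widehat Q_h=Q_h^-$. It would also need a discretization of $\partial_x|U|^2$ compatible with the discrete derivative used in the $U$-part of the momentum.
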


The fully discrete scheme defined by the preceding LDG formulation together with the stochastic update \eqref{FD1} inherits the discrete averaged plasmon number evolution law, discrete averaged particle number conservation law, and discrete averaged momentum evolution law.  To preserve discrete averaged modified energy evolution law, we turn to the CSAV-based temporal discretization introduced in Section~\ref{sec:semidiscrete}. Using the LDG auxiliary variables introduced in \eqref{eq:spaceauxiliaryvs}, the CSAV-based temporal discretization \eqref{eq:timedis2U}--\eqref{eq:timedis2UV} can be rewritten as follows
\begin{equation*}
\left\{\begin{aligned}
      &\frac{\widetilde{U}^{n+1}-U^{n}}{\Delta{t}}
      =
      \i\partial_{x}R^{n+\frac{1}{2}}
      -
      \i r^{n+\frac{1}{2}}\big(\gamma_{1}U^{n+\frac{1}{2}}
      V^{n+\frac{1}{2}}
      +
      \beta S^{n+\frac{1}{2}}
      U^{n+\frac{1}{2}}\big),
      \\
      &\frac{\widetilde{V}^{n+1}-V^{n}}{\Delta{t}}
      =
      -\partial_{x}Q^{n+\frac{1}{2}}
      +
      \gamma_{2}r^{n+\frac{1}{2}}\partial_{x}S^{n+\frac{1}{2}}
      -
      \frac{1}{2}s^{n+\frac{1}{2}}\partial_{x}X^{n+\frac{1}{2}},
      \\
      &R^{n+\frac{1}{2}}
      =
      \partial_{x}U^{n+\frac{1}{2}},\quad P^{n+\frac{1}{2}}
      =
      \partial_{x}V^{n+\frac{1}{2}}, 
      \quad Q^{n+\frac{1}{2}}
      =
      \partial_{x}P^{n+\frac{1}{2}}. 
\end{aligned}\right.
\end{equation*}
Now we define the corresponding full discretization based on CSAV and LDG formulation: find $\widetilde{U}_{h}^{n+1}, R_{h}^{n+\frac{1}{2}} \in V_{h}^{c}$ and $\widetilde{V}_{h}^{n+1}, P_{h}^{n+\frac{1}{2}}, Q_{h}^{n+\frac{1}{2}} \in V_{h}^{r}$ such that 
\begin{equation*}
\left\{\begin{aligned}
      &\bigg\langle \frac{\widetilde{U}_{h}^{n+1}
      - U_{h}^{n}}{\Delta{t}}, \eta\bigg\rangle_{K}
      =
      \i\big\langle \widehat{R}_{h}^{n+\frac{1}{2}}\bm{\nu},
      \eta \big\rangle_{\partial{K}}
      - 
      \i\big\langle R_{h}^{n+\frac{1}{2}}, \partial_{x}\eta\big\rangle_{K}
      \\&~~~~~~~~~~~~~~~~~~~~~~~~~~~~-
      \i\gamma_{1} r^{n+\frac{1}{2}}
      \big\langle U_{h}^{n+\frac{1}{2}}
      V_{h}^{n+\frac{1}{2}}, \eta\big\rangle_{K}
      -
      \i\beta r^{n+\frac{1}{2}}
      \big\langle S_{h}^{n+\frac{1}{2}}
      U_{h}^{n+\frac{1}{2}}, \eta\big\rangle_{K},
      \\
       &\bigg\langle \frac{\widetilde{V}_{h}^{n+1}
      - V_{h}^{n}}{\Delta{t}}, \phi\bigg\rangle_{K}
      =
      -\big\langle \widehat{Q}_{h}^{n+\frac{1}{2}}\bm{\nu},
      \phi \big\rangle_{\partial{K}}
      +
      \gamma_{2} r^{n+\frac{1}{2}}
      \big\langle \widehat{S}_{h}^{n+\frac{1}{2}}\bm{\nu},
      \phi \big\rangle_{\partial{K}}
      -
      \frac{1}{2}s^{n+\frac{1}{2}}
      \big\langle \widehat{X}_{h}^{n+\frac{1}{2}}\bm{\nu},
      \phi \big\rangle_{\partial{K}}
      \\&~~~~~~~~~~~~~~~~~~~~~~~~~~~~+
      \big\langle Q_{h}^{n+\frac{1}{2}},
      \partial_{x}\phi \big\rangle_{K}
      -
      \gamma_{2} r^{n+\frac{1}{2}}
      \big\langle S_{h}^{n+\frac{1}{2}},
      \partial_{x}\phi \big\rangle_{K}
      +
      \frac{1}{2}s^{n+\frac{1}{2}}
      \big\langle X_{h}^{n+\frac{1}{2}},
      \partial_{x}\phi \big\rangle_{K},
      \\
      &\big\langle R_{h}^{n+\frac{1}{2}},
      \xi \big\rangle_{K}
      =
      \big\langle \widehat{U}_{h}^{n+\frac{1}{2}}\bm{\nu},
      \xi \big\rangle_{\partial{K}}
      -
      \big\langle U_{h}^{n+\frac{1}{2}},
      \partial_{x}\xi \big\rangle_{K},
      \\
      &\big\langle P_{h}^{n+\frac{1}{2}},
      \varphi\rangle_{K}
      =
      \big\langle \widehat{V}_{h}^{n+\frac{1}{2}}\bm{\nu},\varphi \big\rangle_{\partial{K}}
      -
      \big\langle V_{h}^{n+\frac{1}{2}},
      \partial_{x}\varphi \big\rangle_{K}, 
      \\
      &\big\langle Q_{h}^{n+\frac{1}{2}},
      \psi \big\rangle_{K}
      =
      \big\langle \widehat{P}_{h}^{n+\frac{1}{2}}\bm{\nu},
      \psi\big\rangle_{\partial{K}}
      - \big\langle P_{h}^{n+\frac{1}{2}},
      \partial_{x} \psi \big\rangle_{K} 
\end{aligned}\right.
\end{equation*}
hold for any test function $\eta,\xi \in V_{h}^{c}$ and $\phi,\varphi,\psi \in V_{h}^{r}$, where the numerical fluxes $$ \widehat{U}_{h}^{n+\frac{1}{2}}, \widehat{R}_{h}^{n+\frac{1}{2}}, \widehat{V}_{h}^{n+\frac{1}{2}}, \widehat{P}_{h}^{n+\frac{1}{2}}, \widehat{Q}_{h}^{n+\frac{1}{2}}, \widehat{S}_{h}^{n+\frac{1}{2}}, \widehat{X}_{h}^{n+\frac{1}{2}}$$ 
are given by
\begin{align*}
      \widehat{U}_{h}^{n+\frac{1}{2}} = (U_{h}^{n+\frac{1}{2}})^{-},\quad
      \widehat{R}_{h}^{n+\frac{1}{2}} = (R_{h}^{n+\frac{1}{2}})^{+},\quad 
      \widehat{V}_{h}^{n+\frac{1}{2}} = (V_{h}^{n+\frac{1}{2}})^{+},\quad
      \widehat{P}_{h}^{n+\frac{1}{2}} = (P_{h}^{n+\frac{1}{2}})^{-},
\end{align*}
and
\begin{align*}
      \widehat{Q}_{h}^{n+\frac{1}{2}} 
      = 
      \frac{(Q_{h}^{n+\frac{1}{2}})^{+} 
      + (Q_{h}^{n+\frac{1}{2}})^{-}}{2},
      \quad
      \widehat{S}_{h}^{n+\frac{1}{2}} 
      = 
      \frac{(S_{h}^{n+\frac{1}{2}})^{+} 
      + (S_{h}^{n+\frac{1}{2}})^{-}}{2},
      \quad
      \widehat{X}_{h}^{n+\frac{1}{2}} 
      = 
      \frac{(X_{h}^{n+\frac{1}{2}})^{+} 
      + (X_{h}^{n+\frac{1}{2}})^{-}}{2}.
\end{align*}
The auxiliary variables $r^{n+1}$ and $s^{n+1}$ are updated by
\begin{align*}
      \frac{r^{n+1}-r^{n}}{\Delta{t}} 
      =&~ 
      \alpha_{1} \bigg( -\frac{1}{\Delta{t}}
      \sum_{K \in \mathcal{T}_{h}} \int_{K} 
      \bigg(\frac{\beta}{2}|\widetilde{U}_{h}^{n+1}|^4 
      + \gamma_1 |\widetilde{U}_{h}^{n+1}|^{2} \widetilde{V}_{h}^{n+1}\bigg)
      \\&~- \bigg(\frac{\beta}{2}|U_{h}^{n}|^4 
      + \gamma_1 |U_{h}^{n}|^{2} V_{h}^{n}\bigg) \diff{x} 
      + r^{n+\frac{1}{2}} P_{1,h}^{n+\frac{1}{2}} \bigg),
\end{align*}
and
\begin{equation*}
      \frac{s^{n+1}-s^{n}}{\Delta{t}}  
      = 
      \alpha_2 \bigg(\frac{\gamma_1}{6\gamma_2} \frac{1}{\Delta{t}}
      \int_{\mathcal O} (\widetilde{V}_{h}^{n+1})^{3} - (V_{h}^{n})^{3} \diff{x} 
      + s^{n+\frac{1}{2}} P_{2,h}^{n+\frac{1}{2}} \bigg),
\end{equation*}
where
\begin{align*}
      P_{1,h}^{n+\frac{1}{2}} 
      &= 
      2\beta \Re\sum_{K \in \mathcal{T}_{h}} \int_{K}
      |U_{h}^{n+\frac{1}{2}}|^{2} U_{h}^{n+\frac{1}{2}} 
      \overline{\frac{\widetilde{U}_{h}^{n+1}
      - U_{h}^{n}}{\Delta{t}}} \diff{x} 
      \\&\quad 
      + 2\gamma_{1} \Re\sum_{K \in \mathcal{T}_{h}} \int_{K}
      U_{h}^{n+\frac{1}{2}} V_{h}^{n+\frac{1}{2}} 
      \overline{\frac{\widetilde{U}_{h}^{n+1}
      - U_{h}^{n}}{\Delta{t}}} \diff{x} 
      +\gamma_{1} \sum_{K \in \mathcal{T}_{h}} 
      \int_{K} |U_{h}^{n+\frac{1}{2}}|^2 
      \frac{\widetilde{V}_{h}^{n+1}
      -V_{h}^{n}}{\Delta{t}} \diff{x},
\end{align*}
and
\begin{align*}
      P_{2,h}^{n+\frac{1}{2}} 
      &= 
      -\frac{\gamma_1}{2\gamma_2} 
      \sum_{K \in \mathcal{T}_{h}} 
      \int_{K} (V_{h}^{n+\frac{1}{2}})^{2} 
      \frac{\widetilde{V}_{h}^{n+1}
      -V_{h}^{n}}{\Delta{t}} \diff{x}.
\end{align*}
By incorporating the stochastic update through the projection operator $\Pi_{h}$, we define the full discretization via the approximation $(U_{h}^{n+1},V_{h}^{n+1})$ given by
\begin{align}
\label{fd2}
      U_{h}^{n+1}
      =
      \widetilde{U}_{h}^{n+1}
      +
      \Pi_{h}(\Phi_{1}\Delta{W_{1}^{n}}),
      \quad
      V_{h}^{n+1}
      =
      \widetilde{V}_{h}^{n+1}
      +
      \Pi_{h}(\Phi_{2}\Delta{W_{2}^{n}}).
\end{align}
 \begin{lemma}
       Under the assumptions of Lemma \ref{thm:existunique}, for any $n\in Z_{N}$, the full discretization \eqref{fd2} satisfies
      \begin{align*}
            &~\E\bigg[\sum_{K \in \mathcal{T}_{h}} 
            \int_{K} |\partial_{x}U_{h}^{n}|^{2}
            + \frac{\beta}{2}|U_{h}^{n}|^{4} 
            + \gamma_{1}|U_{h}^{n}|^{2}V_{h}^{n}
            -
            \frac{\gamma_{1}}{6\gamma_{2}}(V_{h}^{n})^{3} 
            + 
            \frac{\gamma_{1}}{2\gamma_{2}}|
            \partial_{x}V_{h}^{n}|^{2} \diff{x}\bigg]
        \\=&~
            \E\bigg[\sum_{K \in \mathcal{T}_{h}} \int_{K}
            |\partial_{x}U_{h}^{0}|^{2} 
            +
            \frac{\beta}{2}|U_{h}^{0}|^{4}
            +
            \gamma_{1}|U_{h}^{0}|^{2}V_{h}^{0}
            -
            \frac{\gamma_{1}}{6\gamma_{2}}(V_{h}^{0})^{3}
            +
            \frac{\gamma_{1}}{2\gamma_{2}}
             |\partial_{x}V_{h}^{0}|^{2} \diff{x}\bigg]
            \\&~+
            \bigg(\|\Phi_{1}\|_{\mathcal L_2^{1}}^{2}
            +
            \frac{\gamma_{1}}{2\gamma_{2}}
            \|\Phi_{2}\|_{\mathcal L_2^{1}}^{2}\bigg)t_{n}
            +
            \gamma_{1}\sum_{j=0}^{n-1}
            \E\bigg[\sum_{K \in \mathcal{T}_{h}} \int_{K} 
            V_{h}^{j+1} \sum_{i=1}^{\infty}|\Phi_{1}e_{1}^{(i)}|^{2}
            \diff{x}\bigg]\Delta{t}
            \\&~
            -
            \frac{\gamma_{1}}{2\gamma_{2}}
            \sum_{j=0}^{n-1}\E\bigg[\sum_{K \in \mathcal{T}_{h}} \int_{K}
             V_{h}^{j+1} \sum_{i=1}^{\infty}(\Phi_{2}e_{2}^{(i)})^{2}
            \diff{x}\bigg]\Delta{t}
            +
            \frac{r^{0} - r^{n}}{\alpha_{1}} 
            + \frac{s^{0} - s^{n}}{\alpha_{2}}
            \\&~+
            \beta\Delta{t}\sum_{j=0}^{n-1}
            \E\bigg[\sum_{K \in \mathcal{T}_{h}} \int_{K}
            \sum_{i=1}^{\infty} {\rm{Re}}\bigg((U_{h}^{j+1})^{2}
            \overline{\Phi_{1}e_{1}^{(i)}}^{2}\bigg) 
            +
            2|U_{h}^{j+1}|^{2} |\Phi_{1}e_{1}^{(i)}|^{2}\diff{x}\bigg].
      \end{align*}   
\end{lemma}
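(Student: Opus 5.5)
The proof mirrors the semi-discrete CSAV argument, with integration by parts replaced by the summation-by-parts identities of the LDG operators. It has two stages. First, we establish pathwise conservation of the discrete modified energy over the deterministic substep $(U_h^n,V_h^n,r^n,s^n)\mapsto(\widetilde U_h^{n+1},\widetilde V_h^{n+1},r^{n+1},s^{n+1})$. Second, we account for the stochastic projection update \eqref{fd2} in expectation, and then sum over time levels.

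\textbf{Deterministic substep.} We interpret $\sum_K\int_K|\partial_x U_h|^2$ and $\sum_K\int_K|\partial_xV_h|^2$ through the LDG auxiliary variables, i.e.\ as $\|R_h\|^2$ and $\|P_h\|^2$. This is the natural discrete gradient energy for the alternating fluxes $\widehat U=U^-$, $\widehat R=R^+$ (respectively $\widehat V=V^+$, $\widehat P=P^-$).

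In the $U$-equation we take $\eta=\widetilde U_h^{n+1}-U_h^n$, sum over $K$, and take real parts. In the $R$-equation (written at levels $n$ and $n+1$ and differenced) we take $\xi=R_h^{n+\frac12}$. Because the alternating fluxes make the element-boundary terms telescope under periodicity, we obtain
\[
\|R_h^{n+1}\|^2-\|R_h^n\|^2=-2r^{n+\frac12}\,\Re\sum_K\big\langle \gamma_1U_h^{n+\frac12}V_h^{n+\frac12}+\beta S_h^{n+\frac12}U_h^{n+\frac12},\widetilde U_h^{n+1}-U_h^n\big\rangle_K .
\]
This right-hand side is exactly $-\Delta t\, r^{n+\frac12}$ times the $U$-part of $P_{1,h}^{n+\frac12}$.

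Similarly, we take $\phi=\Delta t\,Q_h^{n+\frac12}$-type test functions in the $V$-equation and use the $P$- and $Q$-equations with test functions $P_h^{n+\frac12}$ and $\widetilde V_h^{n+1}-V_h^n$. This expresses $\frac{\gamma_1}{2\gamma_2}(\|P_h^{n+1}\|^2-\|P_h^n\|^2)$ through the $S_h$- and $X_h$-terms. In addition, testing the $V$-equation with $\phi=\Pi_h|U_h^{n+\frac12}|^2$ and $\phi=\Pi_h(V_h^{n+\frac12})^2$ reproduces the remaining parts of $P_{1,h}^{n+\frac12}$ and $P_{2,h}^{n+\frac12}$. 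Here the central fluxes for $\widehat Q,\widehat S,\widehat X$ give the skew-symmetric cancellation, exactly as the identity $\partial_x^3V^{n+\frac12}=\dots$ does in the semi-discrete proof.

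Finally, the $r$- and $s$-updates convert the increments of the nonlinear energies $E_{1,h},E_{2,h}$ into $\Delta t\,r^{n+\frac12}P_{1,h}^{n+\frac12}$ and $\Delta t\,s^{n+\frac12}P_{2,h}^{n+\frac12}$. Adding everything yields a vanishing "Error" term, and hence the fully discrete analogue of \eqref{eq:detsubenergycon_new}.

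\textbf{Stochastic update.} We expand each energy term at $U_h^{n+1}=\widetilde U_h^{n+1}+\Pi_h\Phi_1\Delta W_1^n$, and likewise for $V_h^{n+1}$. Terms linear in the increment have zero mean, since $\Delta W_k^n$ is independent of $\mathcal F_{t_n}$ and of the quantities determined from it. The quadratic and cubic terms produce $\Delta t$ times the trace expressions, as in \eqref{eq:energy_aux1}--\eqref{eq:energy_aux5}. Independence of $W_1$ and $W_2$ kills the mixed terms in $|U|^2V$. Summing over $j=0,\dots,n-1$ and telescoping the $r,s$ contributions gives the stated formula.

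\textbf{Main obstacle.} The hardest step is the consistency of the noise terms with the statement, which writes $\Phi_ke_k^{(i)}$ rather than $\Pi_h\Phi_ke_k^{(i)}$, and $\partial_x$ of the projected noise. We would first check whether the identities hold with $\Pi_h$ inserted, e.g.\ $\|\Pi_h\Phi_1\|_{\mathcal L_2^1}$ versus $\|\Phi_1\|_{\mathcal L_2^1}$. If they do not hold, we would interpret the noise as lying in the finite element space (so that $\Pi_h$ acts as the identity), or state the result with $\Phi_k$ replaced by $\Pi_h\Phi_k$. The LDG summation-by-parts bookkeeping for the third-order term is the other delicate point, but it is standard for these flux choices.
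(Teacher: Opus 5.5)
Your two-stage plan is the natural LDG transcription of the semi-discrete CSAV argument. That is all the paper offers here, since this lemma is stated without a separate proof. Reading the gradient energies as $\|R_h\|^2$, $\|P_h\|^2$ is the right way to make the deterministic substep conservative.

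\textbf{The gap is in the stochastic step}, where you defer to \eqref{eq:energy_aux5}. Expanding $|\widetilde U_h^{j+1}+\Pi_h(\Phi_1\Delta W_1^j)|^4$ also produces the fourth moment of the increment, which does not vanish. Both \eqref{eq:energy_aux5} and the statement account for it incorrectly. Put $\chi_i:=\Pi_h\Phi_1e_1^{(i)}$ and $\Lambda:=2\big(\sum_i|\chi_i|^2\big)^2+\big|\sum_i\chi_i^2\big|^2$. By Wick's formula, $\E|\Pi_h(\Phi_1\Delta W_1^j)|^4=\Delta t^2\Lambda$ pointwise. Odd moments vanish, and
\begin{align*}
\E\bigg[\frac{\beta}{2}\int_{\mathcal O}|U_h^{j+1}|^4\diff x\bigg]
=&~\E\bigg[\frac{\beta}{2}\int_{\mathcal O}|\widetilde U_h^{j+1}|^4\diff x\bigg]
+\frac{\beta}{2}\Delta t^2\int_{\mathcal O}\Lambda\diff x\\
&~+\beta\Delta t\,\E\bigg[\int_{\mathcal O}\sum_i\Big(\Re\big((\widetilde U_h^{j+1})^2\overline{\chi_i}^{\,2}\big)+2|\widetilde U_h^{j+1}|^2|\chi_i|^2\Big)\diff x\bigg].
\end{align*}
The statement replaces $\widetilde U_h^{j+1}$ by $U_h^{j+1}$ in the last line. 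This adds $\beta\Delta t^2\int_{\mathcal O}\Lambda\diff x$, because $\E[(U_h^{j+1})^2\mid\mathcal F_{t_j}]=(\widetilde U_h^{j+1})^2+\Delta t\sum_i\chi_i^2$ and $\E[|U_h^{j+1}|^2\mid\mathcal F_{t_j}]=|\widetilde U_h^{j+1}|^2+\Delta t\sum_i|\chi_i|^2$. So the right-hand side of the lemma is too large by $\frac{\beta}{2}\Delta t^2\int_{\mathcal O}\Lambda\diff x$ in every step.

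A concrete check: take $U_h^0=V_h^0=0$, $\Phi_2=0$, and $\Phi_1e_1^{(1)}\equiv c\in\mathbb R$ with all other modes zero. The substep admits the solution $\widetilde U_h^1=\widetilde V_h^1=0$, $r^1=s^1=1$. At $n=1$ the terms proportional to $\beta$ are $\frac32\beta c^4L\Delta t^2$ on the left and $3\beta c^4L\Delta t^2$ on the right, and everything else is independent of $\beta$. Your sketch treats the quartic term only by appeal to \eqref{eq:energy_aux5}, so as written it would prove a false identity. What your method does give is the lemma with an extra term $-\frac{\beta}{2}\Delta t^2\sum_{j=0}^{n-1}\int_{\mathcal O}\Lambda\diff x$, an $O(\Delta t)$ correction. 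Equivalently, keep $\widetilde U_h^{j+1}$ in the last sum and add $\frac{\beta}{2}\Delta t^2\sum_{j=0}^{n-1}\int_{\mathcal O}\Lambda\diff x$.

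Two further points need attention.

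\textbf{Noise constant.} Once the gradient energy is $\|R_h\|^2$, the noise constant is $\sum_i\|D_h^-\chi_i\|^2$, where $D_h^-:U_h\mapsto R_h$ is the LDG gradient with flux $U^-$. Likewise the $V$-contribution is $\frac{\gamma_1}{2\gamma_2}\sum_i\|D_h^+\Pi_h\Phi_2e_2^{(i)}\|^2$, with $D_h^+:V_h\mapsto P_h$. Neither is an $\mathcal L_2^1$ norm of $\Pi_h\Phi_k$, so this is how the statement should be corrected. As you suspected, $\Phi_ke_k^{(i)}$ must also become $\Pi_h\Phi_ke_k^{(i)}$ throughout.

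\textbf{Definition of $S_h$.} For degree $k\ge1$ your deterministic cancellation relies on two incompatible readings of $S_h^{n+\frac12}$, which the paper never defines.
\begin{itemize}
\item Identifying $\|\widetilde R_h^{n+1}\|^2-\|R_h^n\|^2$ with $-\Delta t\,r^{n+\frac12}$ times the $U$-part of $P_{1,h}^{n+\frac12}$ needs $S_h^{n+\frac12}=|U_h^{n+\frac12}|^2$ pointwise in the $U$-equation.
\item You also test the $V$-equation with $Q_h^{n+\frac12}$, $\Pi_h|U_h^{n+\frac12}|^2$ and $\Pi_h(V_h^{n+\frac12})^2$, and must remove the resulting terms, for instance the one proportional to $(r^{n+\frac12})^2$. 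That uses the skew-symmetry $\langle\mathcal D_cf,g\rangle=-\langle f,\mathcal D_cg\rangle$ of the central-flux derivative through which $\widehat Q,\widehat S,\widehat X$ enter. This identity holds in general only for $f,g\in V_h^r$, so it requires $S_h^{n+\frac12}=\Pi_h|U_h^{n+\frac12}|^2$ and $X_h^{n+\frac12}=\Pi_h(V_h^{n+\frac12})^2$ in the $V$-equation.
\end{itemize}
The two readings coincide for $k=0$, which is the setting of the experiments. Otherwise you must state explicitly that the scheme uses the pointwise product in the $U$-equation and the projections in the $V$-equation.
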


\section{Numerical experiments}\label{sec:nmuexperiments}
In this section, we present a series of numerical experiments to illustrate the theoretical results. In particular, we verify the evolution laws of the main physical quantities and investigate the strong convergence behavior of the proposed numerical discretizations for the SSKdV equation \eqref{eq:SKeq}. For comparison, we also present numerical results for the deterministic Schr\"{o}dinger--KdV equation \eqref{eq:detsubsystem}. Throughout the experiments, we take $\beta = 1$, $\gamma_{1} = \gamma_{2} = 1$, $\mathcal{O} = (0,L)$ with $L = 2\pi$, and the initial values $u^{0}(x) = e^{-(x-\pi)^{2}}e^{\i x}, v^{0}(x) = \sin x, x \in [0,2\pi]$. 

For the complex-valued Schr\"{o}dinger component, we use the complex Fourier modes 
$$e_{i}(x) = \frac{1}{\sqrt{L}} e^{\i \frac{2\pi i}{L} x}, 
\quad x \in \mathcal{O}, i \in \mathbb{Z} \backslash \{0\}$$
as the range modes of the noise operator, where $\mathbb{Z}$ denotes the set of integers. Let $\{g_{i}\}_{i \in \mathbb{Z} \backslash \{0\}}$ be an orthonormal basis of $H_{\R}$. By defining $\Phi_{1} g_{i} = \sigma_{i}^{(1)}e_{i}, i \in \mathbb{Z} \backslash \{0\}$, $\Phi_{1}$ maps real-valued noise directions into the complex-valued space $H^{1}$ with coefficients $\{\sigma_{i}^{(1)}\}_{i \neq 0} \subset \R$ satisfying
$$\|\Phi_{1}\|_{\mathcal L_2^{1}}^{2} = 
  \sum_{i \neq 0} \bigg(1 + \Big(\frac{2\pi i}{L}\Big)^{2}\bigg)  
  |\sigma_{i}^{(1)}|^{2} < \infty.$$
Besides, for the real-valued KdV component, the noise is expanded in a real Fourier basis consisting of sine and cosine functions. This choice preserves both real-valuedness and spatial translation invariance of the stochastic forcing. Let $e_{0}(x) = \frac{1}{\sqrt{L}}$ and 
\begin{align*}
      e_{i}^{(c)}(x) = \sqrt{\frac{2}{L}}\cos\frac{2\pi i x}{L},
      \quad
      e_{i}^{(s)}(x) = \sqrt{\frac{2}{L}}\sin\frac{2\pi i x}{L},
      \quad x \in \mathcal{O}, i \in \N      
\end{align*}
form the real Fourier basis of $H_{\R}$. In the present simulations, the constant mode is not forced, that is, the coefficient of $e_{0}$ is set to zero. The corresponding noise operator $\Phi_{2} \colon H_{\R} \to H_{\R}^{1}$ is defined through its real Fourier representation
\begin{align*}
      \Phi_{2}\varphi = \sum_{i=1}^{\infty} \sigma_{i}^{(2)}
      \big(\langle \varphi, e_{i}^{(c)} \rangle e_{i}^{(c)}
      + \langle \varphi, e_{i}^{(s)} \rangle e_{i}^{(s)}\big)
\end{align*}
with coefficients $\{\sigma_{i}^{(2)}\}_{i \in \N} \subset \R$ satisfying
\begin{align*}
      \sum_{i=1}^{\infty}2\bigg(1 + \Big(\frac{2\pi i}{L}\Big)^{2}\bigg)|\sigma_{i}^{(2)}|^{2} < \infty.
\end{align*}
Moreover, let $\{W_{1}(t)\}_{t \geq 0}$ and $\{W_{2}(t)\}_{t \geq 0}$ be two independent cylindrical Wiener processes on $H_{\R}$, represented via the above Fourier bases as follows
\begin{align*}
      W_{1}(t) = \sum_{i \neq 0} \beta_{i}^{(1)}(t)g_{i},
      \quad
      W_{2}(t) = \sum_{i=1}^{\infty} \big(\beta_{i}^{(2,c)}(t)e_{i}^{(c)}
      + \beta_{i}^{(2,s)}(t)e_{i}^{(s)}\big), \quad t \geq 0.  
\end{align*}
Here, all the sequences of scalar Brownian motions $\{\beta_{i}^{(1)}\}_{i \neq 0}$,
$\{\beta_{i}^{(2,c)}\}_{i \in \N}$ and $\{\beta_{i}^{(2,s)}\}_{i \in \N}$ are assumed to be mutually independent.

\begin{figure}[!htbp]
\begin{center}
      \subfigure[Evolution law of plasmon number]
      {\includegraphics[width = 6.75cm, height = 3.75cm]
      {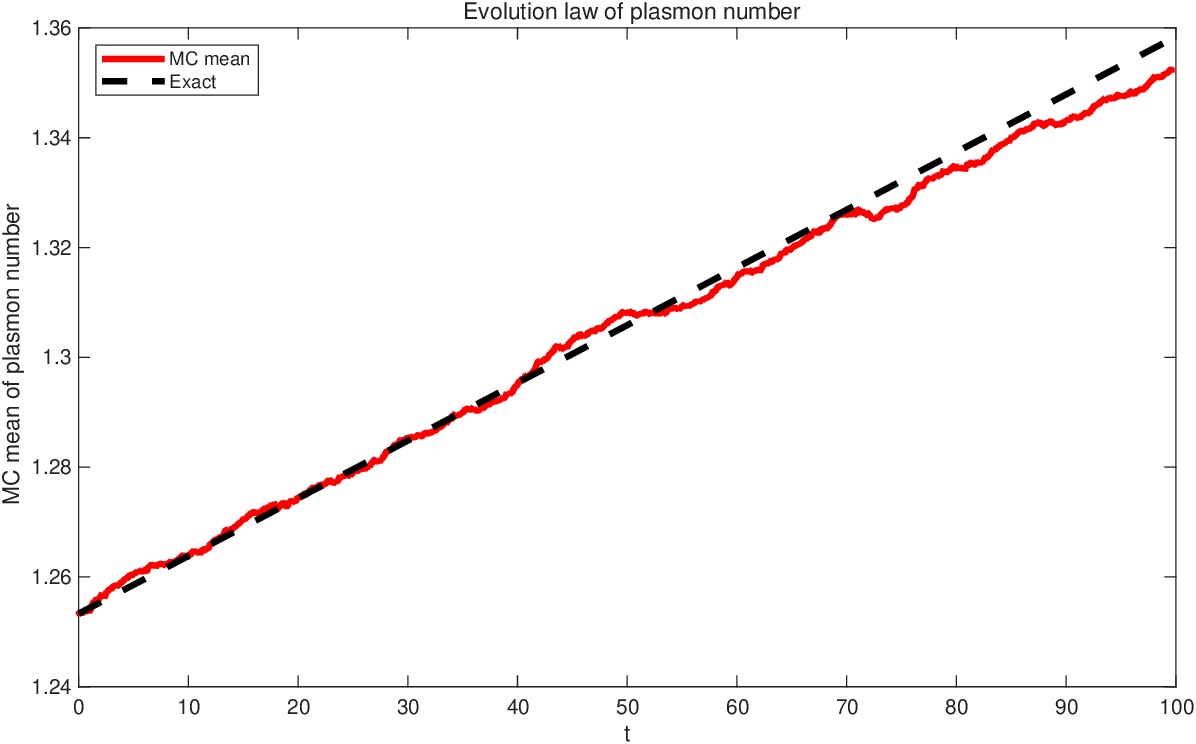}}
      \quad
      \subfigure[Evolution law of particle number]
      {\includegraphics[width = 6.75cm, height = 3.75cm]
      {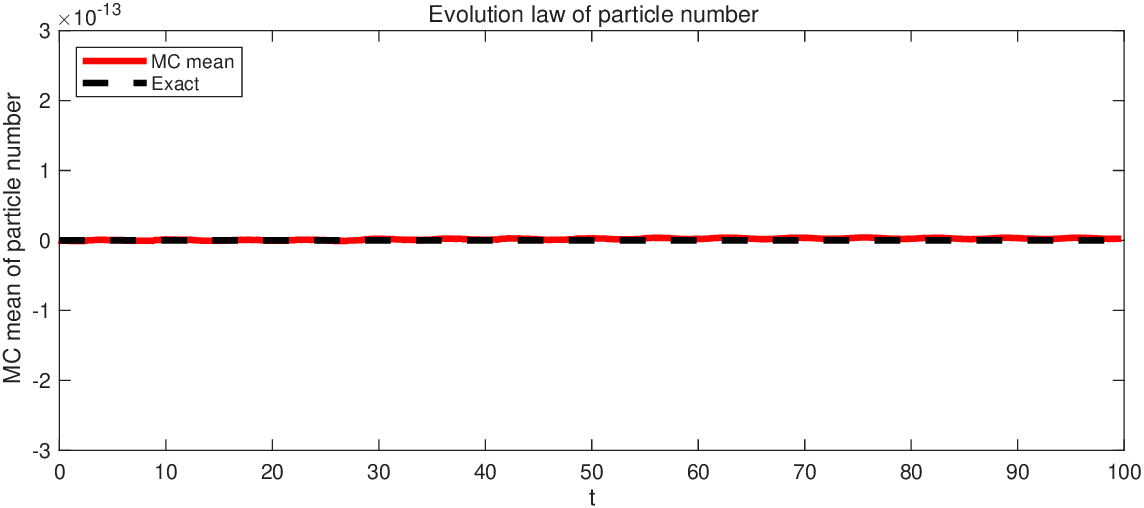}}
      \\
      \subfigure[Evolution law of momentum]
      {\includegraphics[width = 6.75cm, height = 3.75cm]
      {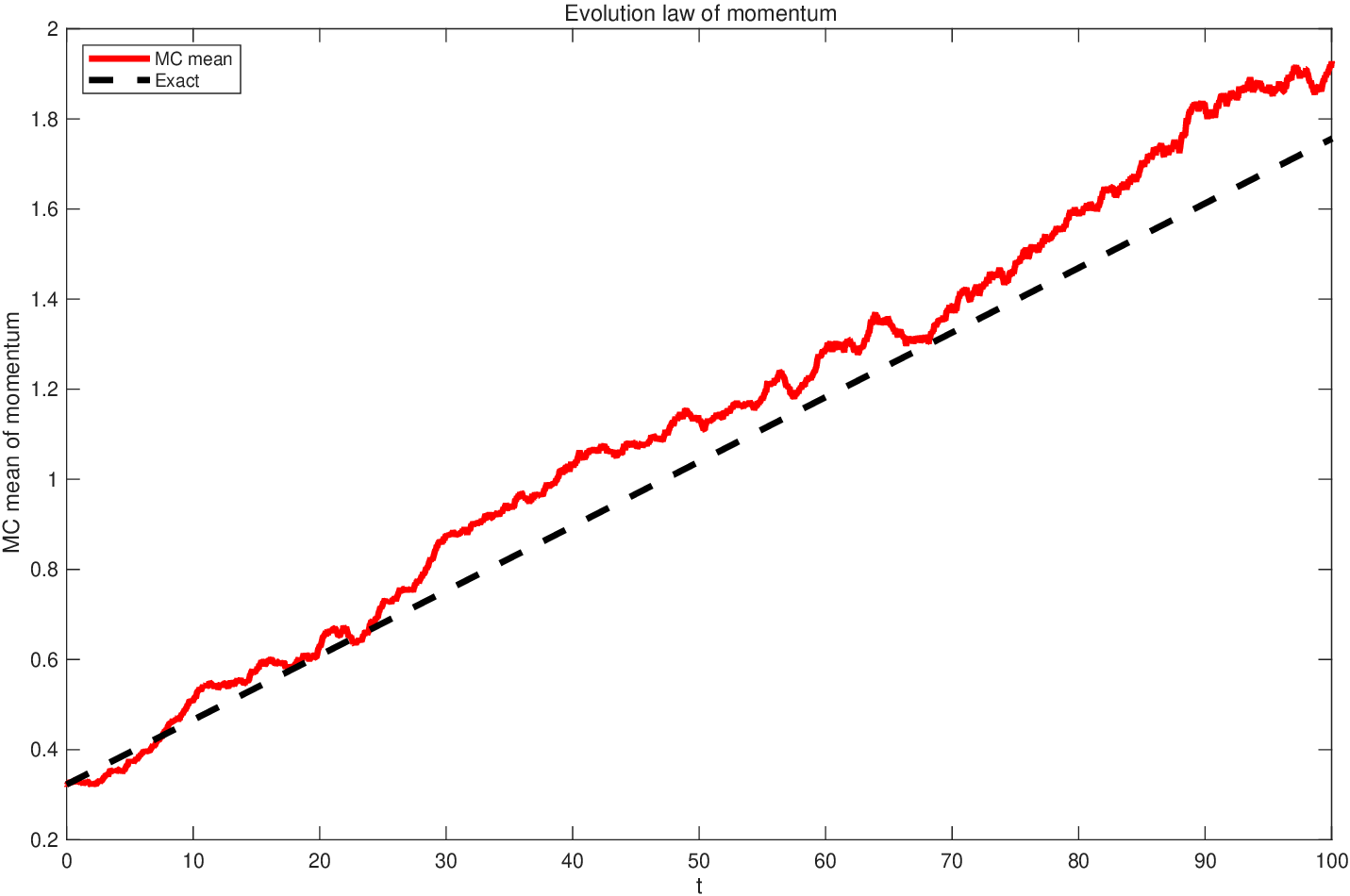}}
      \quad
      \subfigure[Evolution law of modified energy]
      {\includegraphics[width = 6.75cm, height = 3.75cm]
      {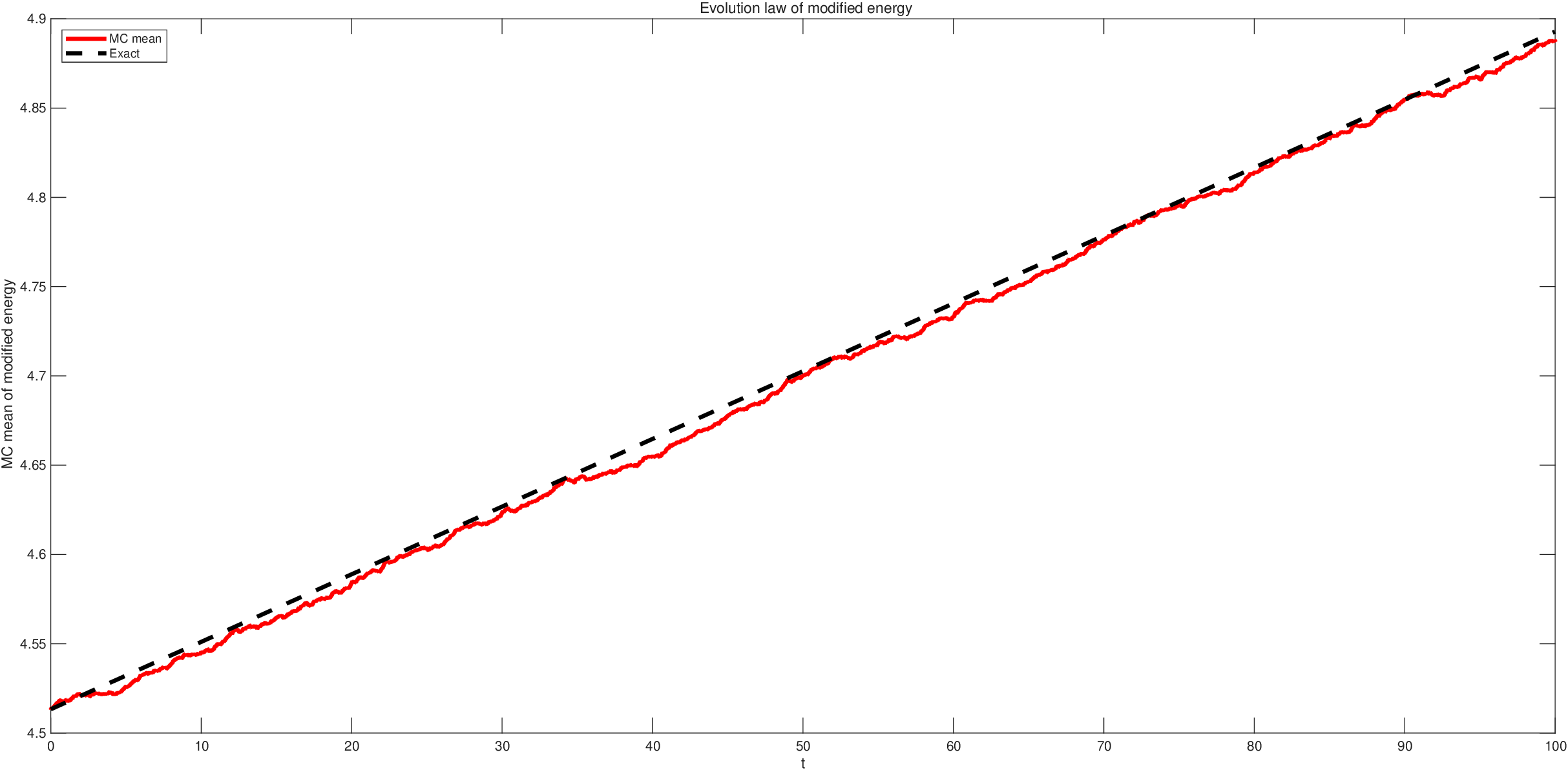}}
      \caption{Evolution laws for SSKdV equation  \eqref{eq:SKeq}}
      \label{fig:evolutionlaws}
\end{center}
\end{figure}

In the numerical simulations, the noise operators $\Phi_{1}$ and $\Phi_{2}$ are approximated by finite-dimensional truncations of their Fourier representations with  $\sigma_{i}^{(1)} = \sigma_{1}(1+|i|)^{-p_{1}}$ for $-K_{1} \leq i \leq K_{2}, i \neq 0$, and $\sigma_{i}^{(2)} = \sigma_{2}(1+|i|)^{-p_{2}}$ for $1 \leq i \leq K_{3}$. Here, the parameters $\sigma_{1} > 0, \sigma_{2} > 0, p_{1} > \frac{3}{2}, p_{2} > \frac{3}{2}$ and the truncation levels $K_{1}, K_{2}, K_{3} \geq 1$ are selected so that $\Phi_{1} \colon H_{\R} \to H^{1}$ and $\Phi_{2} \colon H_{\R} \to H_{\R}^{1}$ are Hilbert--Schmidt operators. In the experiments below, we use the parameter values 
\begin{align*}
      \sigma_{1} = 0.08, \quad
      \sigma_{2} = 0.06, \quad
      p_{1} = p_{2} = 2, \quad
      K_{1} = K_{2} = K_{3} = 8.
\end{align*}
Since $\partial_{x}e_{i} = \i \frac{2\pi i}{L} e_{i}$, and $\Phi_{1} g_{i} = \sigma_{i}^{(1)}e_{i}$ for $i \neq 0$, one obtains 
$${\rm{Tr}} (\Phi_{1}^{*}\partial_{x}\Phi_{1}) 
= \sum_{i \neq 0} \big\langle \Phi_{1}^{*}\partial_{x}\Phi_{1} g_{i}, g_{i} \big\rangle 
= \sum_{i \neq 0} \left\langle \i\frac{2\pi i}{L}\sigma_{i}^{(1)}e_{i},
  \sigma_{i}^{(1)}e_{i} \right\rangle  
= \i \sum_{i \neq 0} \frac{2\pi i}{L} |\sigma_{i}^{(1)}|^{2},$$
and hence  
\begin{align*}
      {\rm{Im}}\big({\rm{Tr}}
      (\Phi_{1}^{*}\partial_{x}\Phi_{1})\big)
      =  
      \sum_{i \neq 0} \frac{2\pi i}{L} 
      |\sigma_{i}^{(1)}|^{2}.
\end{align*}
Under the symmetric truncation $K_{1} = K_{2}$ and $\sigma_{-i}^{(1)} = \sigma_{i}^{(1)}$, the positive and negative contributions cancel pairwise, and hence ${\rm{Im}}\big({\rm{Tr}}(\Phi_{1}^{*}\partial_{x}\Phi_{1})\big) = 0$. 
In addition, the increments of cylindrical Wiener process are projected onto the finite element spaces via the projection operator $\Pi_{h}$. At each time step $t_{n}$, the stochastic updates take the form
\begin{gather*}
      \Pi_{h}(\Phi_{1} \Delta{W_{1}^{n}})
      =
      \sum_{\substack{-K_1\leq i \leq K_2\\ i \neq 0}}
      \sigma_{i}^{(1)} \Pi_{h}(e_{i}) \Delta\beta_{i}^{(1,n)},
      \\
      \Pi_{h}(\Phi_{2} \Delta{W_{2}^{n}})
      =
      \sum_{1 \leq i \leq K_{3}} \sigma_{i}^{(2)} 
      \big(\Pi_{h}(e_{i}^{(c)}) \Delta\beta_{i}^{(2,c,n)}
      + \Pi_{h}(e_{i}^{(s)}) \Delta\beta_{i}^{(2,s,n)} \big)     
\end{gather*}
with $\Delta\beta_{i}^{(1,n)} := \beta_{i}^{(1)}(t_{n+1}) - \beta_{i}^{(1)}(t_{n})$,
$\Delta\beta_{i}^{(2,c,n)} := \beta_{i}^{(2,c)}(t_{n+1}) - \beta_{i}^{(2,c)}(t_{n})$,
and $\Delta\beta_{i}^{(2,s,n)} := \beta_{i}^{(2,s)}(t_{n+1}) - \beta_{i}^{(2,s)}(t_{n})$.
These increments are mutually independent normal random variables with mean zero and variance $\Delta{t}$.

\begin{figure}[h]
\begin{center}
      \subfigure[Conservation law of plasmon number]
      {\includegraphics[width = 6.75cm, height = 3.75cm]
      {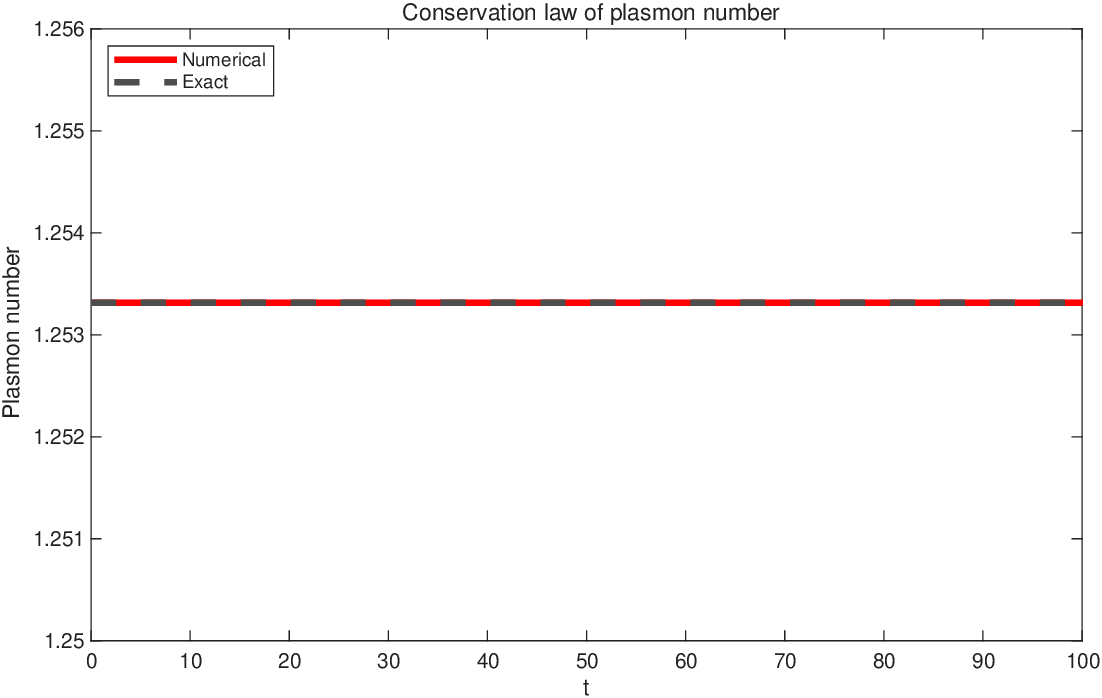}}
      \quad
      \subfigure[Conservation law of particle number]
      {\includegraphics[width = 6.75cm, height = 3.75cm]
      {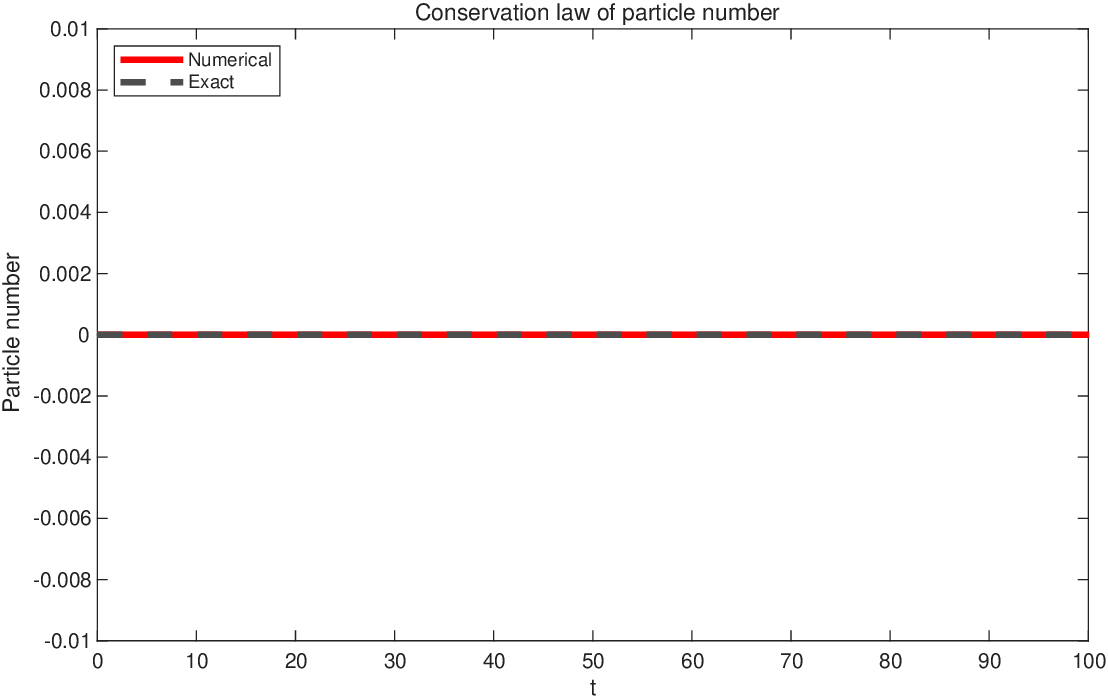}}
      \\
      \subfigure[Conservation law of momentum]
      {\includegraphics[width = 6.75cm, height = 3.75cm]
      {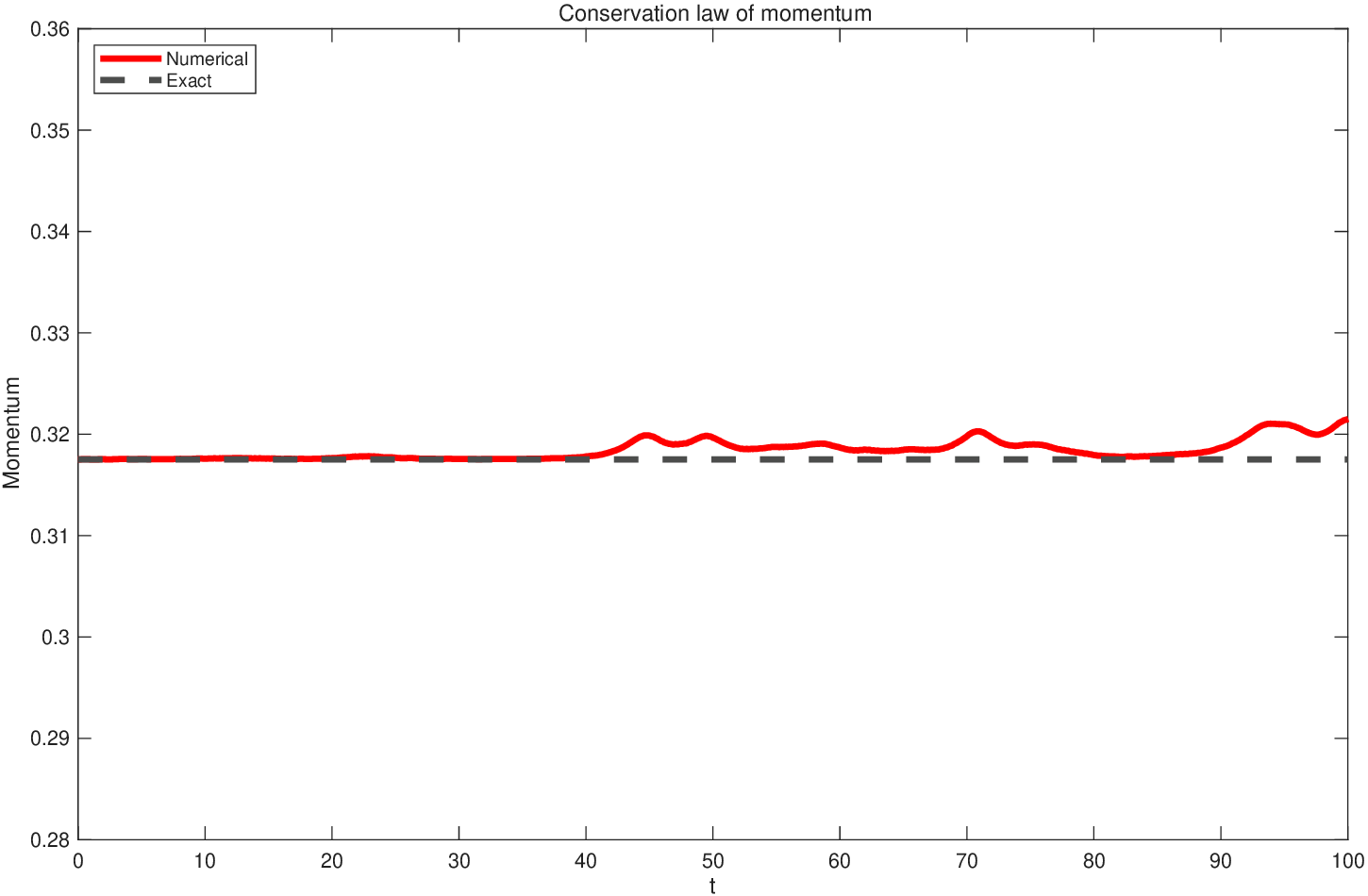}}
      \quad
      \subfigure[Conservation law of modified energy]
      {\includegraphics[width = 6.75cm, height = 3.75cm]
      {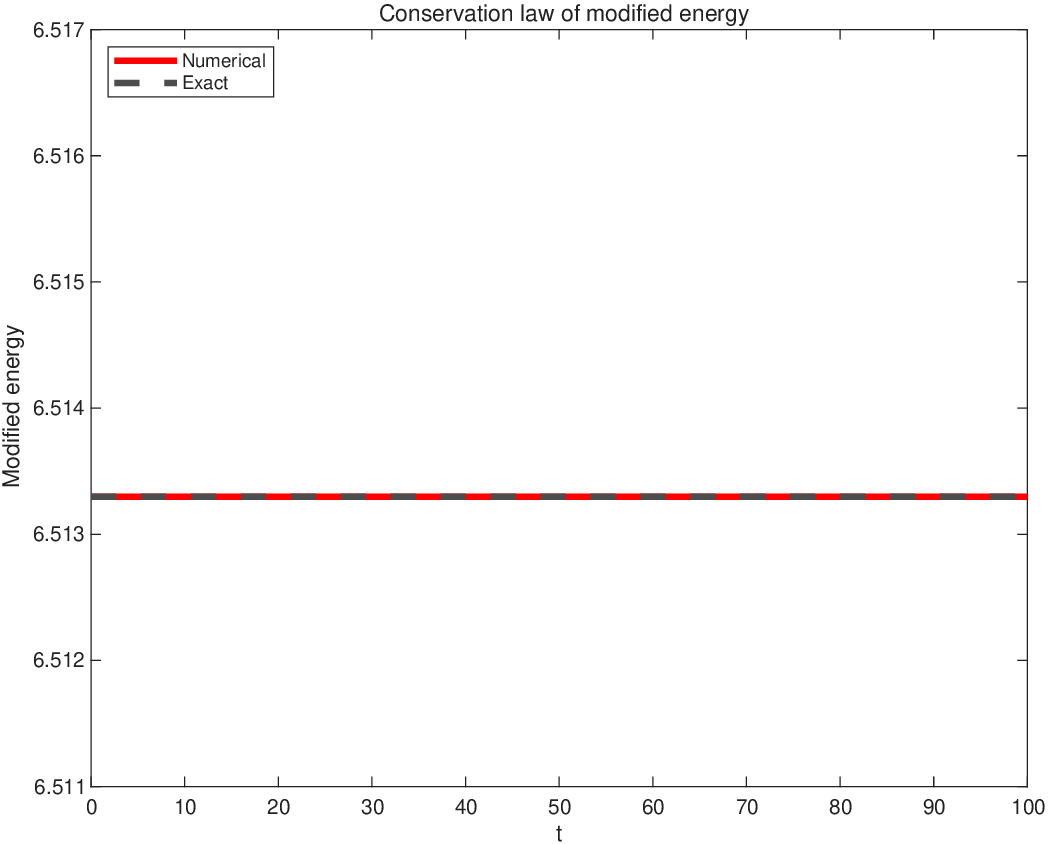}}
      \caption{Conservation laws for Schr\"{o}dinger--KdV equation \eqref{eq:detsubsystem}}
      \label{fig:conservationlaws}
\end{center}
\end{figure}

\begin{figure}[h]
\begin{center}
      \subfigure[Convergence order of \eqref{eq:temproaldis1}]
      {\includegraphics[width = 6.75cm, height = 3.75cm]
      {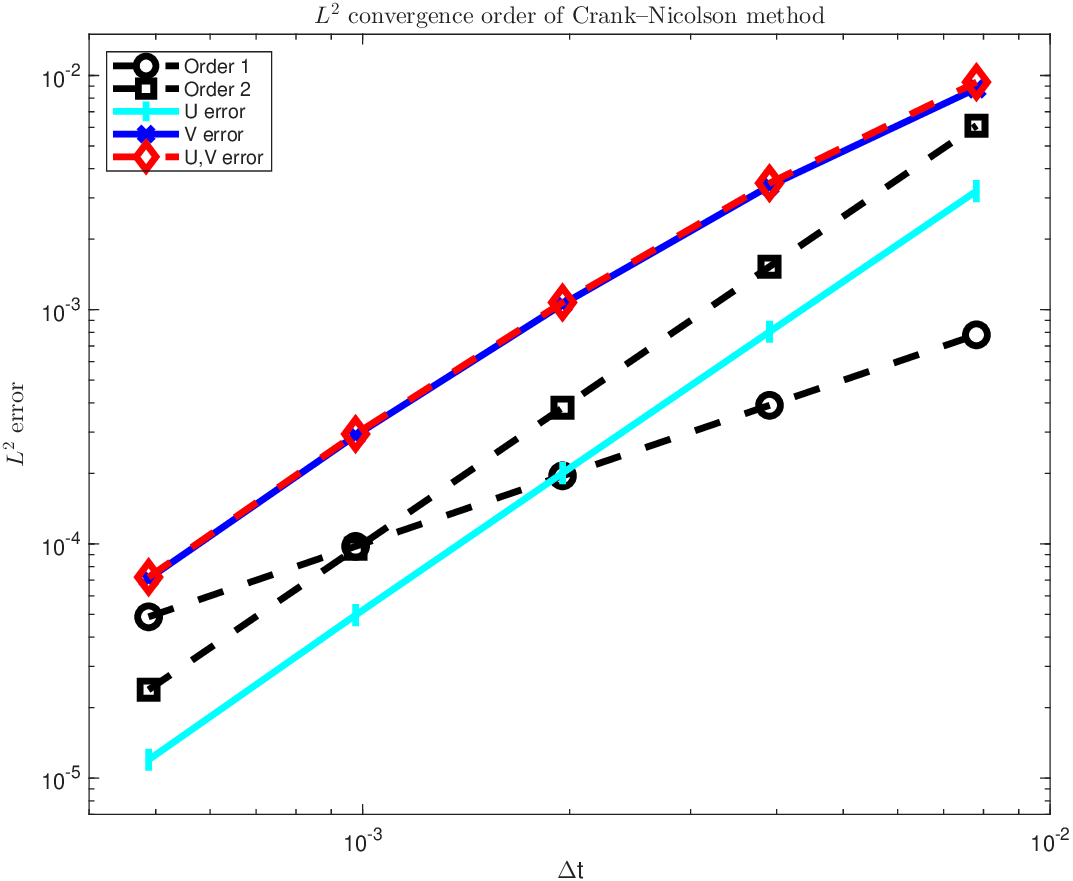}}
      \quad
      \subfigure[Convergence order of \eqref{eq:timedis2U}]
      {\includegraphics[width = 6.75cm, height = 3.75cm]
      {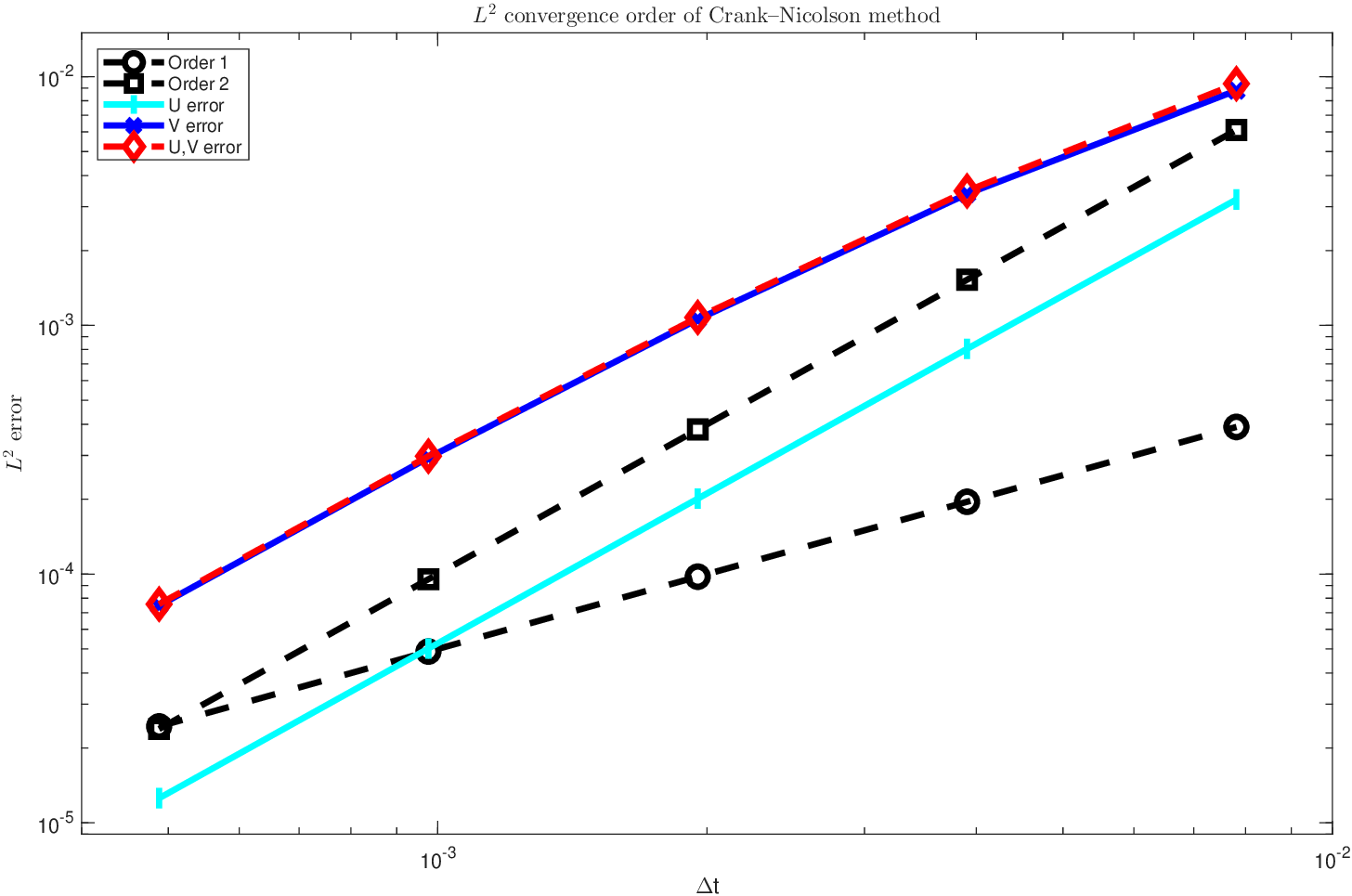}}
      \\
      \subfigure[Strong convergence order of \eqref{eq:temproaldis1UVn1}]
      {\includegraphics[width = 6.75cm, height = 3.75cm]
      {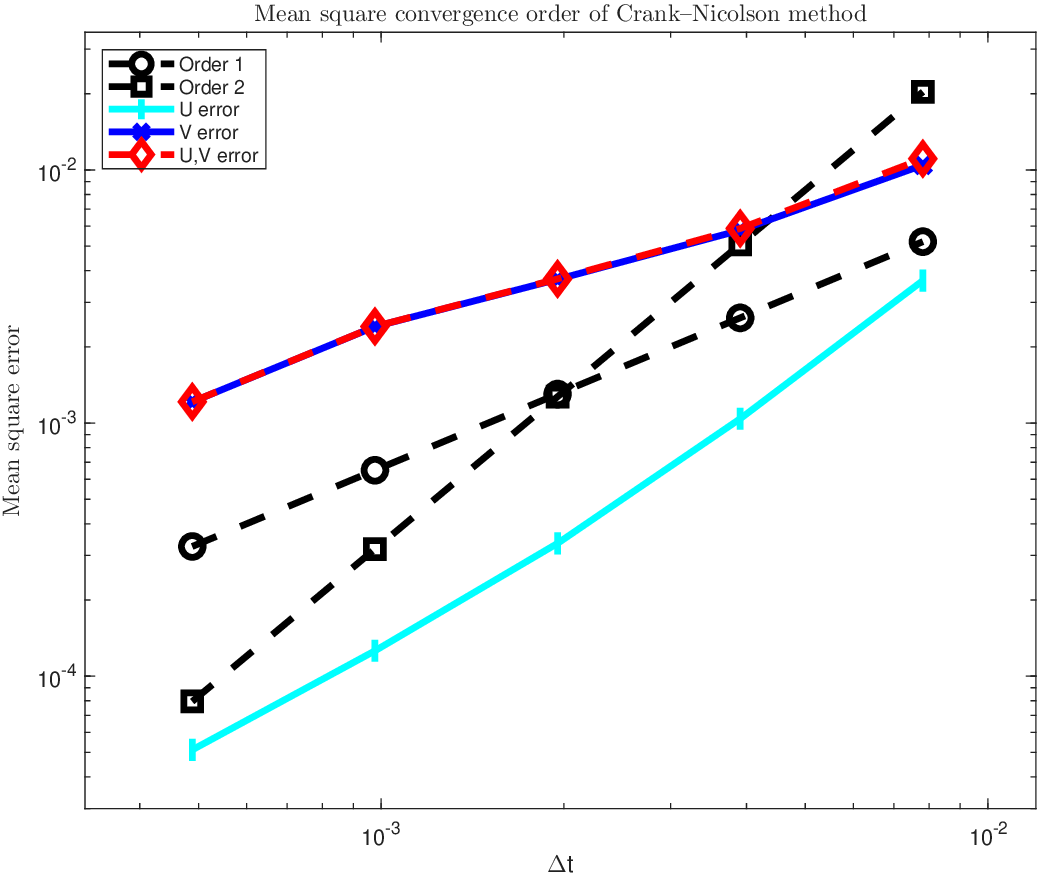}}
      \quad
      \subfigure[Strong convergence order of  \eqref{eq:timedis2UV}]
      {\includegraphics[width = 6.75cm, height = 3.75cm]
      {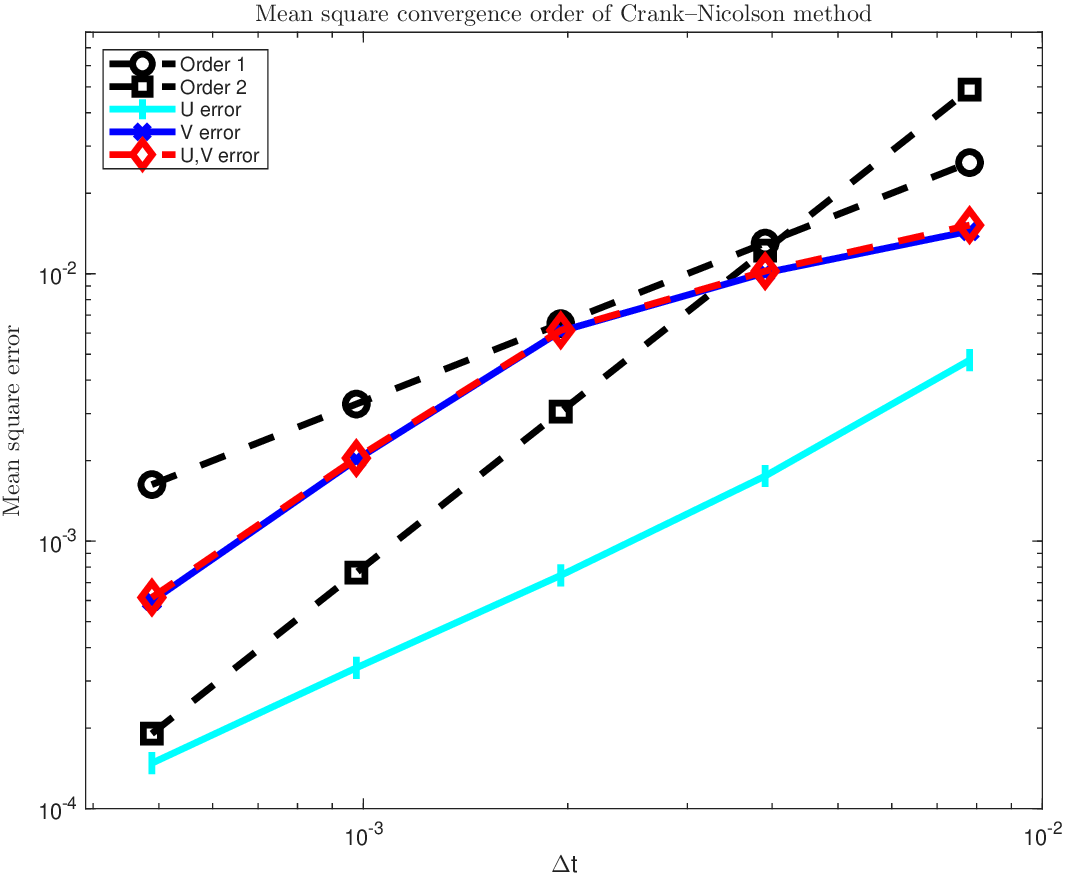}}
      \caption{Convergence orders of numerical discretizaitons for \eqref{eq:detsubsystem} and \eqref{eq:SKeq}}
      \label{fig:convergenceorders}
\end{center}
\end{figure}



In all numerical experiments, the coupled nonlinear algebraic systems generated by the proposed discretizations are solved at each time step by a Picard iteration, with a stopping tolerance of  $10^{-12}$ and a maximum of $80$ iterations. For the spatial approximation, we employ the LDG method with piecewise constant (P0) basis functions on a uniform partition of $[0,L]$ into $N_{x} = 80$ cells, subject to periodic boundary conditions; see, e.g., \cite{larson2013finite, riviere2008discontinuous}. The expectation is approximated by Monte Carlo simulation using $1000$  independent  sample paths.

Figure \ref{fig:evolutionlaws} displays the evolution laws of four key physical quantities for the stochastic equation \eqref{eq:SKeq} on the time interval $[0,100]$, obtained by the fully discrete schemes. The time stepsize is chosen as $\Delta{t} = 10^{-2}$ for panels (a), (b), and (d), and $\Delta{t} = 10^{-3}$ for panel (c). As shown in Figure \ref{fig:evolutionlaws}, the MC mean of the particle number remains nearly constant over time in panel (b), in excellent agreement with the theoretical conservation law. Meanwhile, the MC means of the plasmon number, momentum, and modified energy in panels (a), (c), and (d), respectively, exhibit the expected linear growth behavior and closely follow the corresponding theoretical reference lines. Under the same numerical setting, Figure \ref{fig:conservationlaws} presents the conservation laws of the corresponding four physical quantities for the deterministic equation \eqref{eq:detsubsystem}. The numerical results show that the proposed fully discrete schemes preserve these quantities up to very small discretization errors, confirming the expected structure-preserving property of the methods in the deterministic case.

We next investigate the convergence behavior of the temporal discretizaitons. Since the exact solution $(u(T),v(T))$ at $T = 1$ is unavailable, we use a highly resolved numerical solution $(U_{h}^{N},V_{h}^{N})$, computed with the sufficiently small time stepsize $\Delta{t} = 2^{-13} = T/N, N \in \N$, as the reference solution. The remaining numerical approximations are computed with five larger time stepsizes $\Delta{t} = 2^{-i}, i = 7,8,9,10,11$. Panels (a) and (b) in Figure \ref{fig:convergenceorders} show the error convergence, measured in the $L^2$-norm, of  \eqref{eq:temproaldis1} and  \eqref{eq:timedis2U} for the deterministic equation \eqref{eq:detsubsystem}, respectively. In both cases, the error curves are parallel to the reference line of slope of $2$, indicating second-order convergence in time. For the stochastic equation \eqref{eq:SKeq}, panels (c) and (d) of Figure \ref{fig:convergenceorders} illustrate the mean-square convergence behavior of \eqref{eq:temproaldis1UVn1} and \eqref{eq:timedis2UV}, respectively. The numerical results suggest that the proposed temporal discretizations are strongly convergent with order $1$.

\section{Conclusions and further work}\label{sec:conclusion}
In this paper, we investigated the SSKdV equation driven by additive noise. We first derived evolution laws for several fundamental averaged physical quantities, including the plasmon number, particle number, momentum, and a modified energy functional. Based on these continuous properties, we constructed two classes of structure-preserving temporal discretizations. The first temporal discretization preserves the discrete averaged evolution laws for the plasmon number and momentum, together with the conservation law of the averaged particle number, while the second one, based on the CSAV reformulation, yields a discrete averaged evolution law for the modified energy functional. By combining these temporal discretizations with the LDG method in space, we further derived the corresponding fully discrete schemes and proved that they inherit the desired discrete physical evolution laws. Numerical experiments confirmed the theoretical predictions and demonstrated the accuracy, robustness, and structure-preserving properties of the proposed methods.

Several problems deserve further investigation. In particular, it would be interesting to establish a rigorous convergence analysis for the proposed fully discrete schemes, especially the mean-square convergence order in the stochastic case. It is also worthwhile to extend the present framework to more general SSKdV type models, such as systems with multiplicative noise or more general nonlinear interactions.

\section*{Acknowledgement}

This work is supported by MOST National Key R$\&$D Program No. 2024FA1015900, the National Natural Science Foundation of China (No. 12471386, No. 12461160278, No. 12201552) and Yunnan Fundamental Research Projects (No. 202601AT070161).








\bibliographystyle{plain}	
\bibliography{BibTeXSSKdV}
\end{document}